\newtheorem{theorem}{Theorem}
\newtheorem{lemma}[theorem]{Lemma}
\newtheorem{proposition}[theorem]{Proposition}
\newtheorem{corollary}[theorem]{Corollary}
\theoremstyle{definition}
\newtheorem{definition}[theorem]{Definition}
\newtheorem{example}[theorem]{Example}
\newtheorem{notation}[theorem]{Notation}
\newtheorem{remark}[theorem]{Remark}
\title{The multiples of a numerical semigroup}
\author{Ignacio Ojeda}
\address{Departamento de Matem\'aticas, Universidad de Extremadura, 06071 Badajoz, Spain}
\email[*corresponding author]{ojedamc@unex.es}
\author{Jos\'e Carlos Rosales}
\address{Departamento de \'Algebra. Universidad de Granada, 18010 Granada, Spain}
\email{jrosales@ugr.es}
\subjclass[2020]{Primary: 20M14 Secondary: 05C05, 11D07}
\keywords{Numerical semigroup; quotient of numerical semigroup; rooted tree; monoids, Frobenius number; genus; pseudo-Frobenius numbers, type.}
\begin{document}

\begin{abstract}
Given two numerical semigroups $S$ and $T$ we say that $T$ is a multiple of $S$ if there exists an integer $d \in \mathbb{N} \setminus \{0\}$ such that $S = \{x \in \mathbb{N} \mid d x \in T\}$. In this paper we study the family of multiples of a (fixed) numerical semigroup. We also address the open problem of finding numerical semigroups of embedding dimension $e$ without any quotient of embedding dimension less than $e$, and provide new families with this property.
\end{abstract}

\maketitle

\section{Introduction}

Let $\mathbb{Z}$ be the set of integers and $\mathbb{N} = \{x \in \mathbb{Z} \mid x \geq 0\}$. A \emph{submonoid} of $(\mathbb{N},+)$ is a subset of $\mathbb{N}$ containing $0$ and closed under addition. 

If $A$ is a non-empty subset of $\mathbb{N}$, then we write $\langle A \rangle$ for the submonoid of $(\mathbb{N},+)$ generated by $A$, that is, 
\[\langle A \rangle = \left\{u_1 a_1 + \ldots + u_n a_n \mid n \in \mathbb{N} \setminus \{0\},\{a_1, \ldots, a_n\} \subseteq A\ \text{and}\ \{u_1, \ldots, u_n\} \subset \mathbb{N} \right\}.\] If $M$ is a submonoid of $(\mathbb{N}, +)$ and $M = \langle A \rangle$ for some $A \subseteq \mathbb{N}$, we say that $A$ is a \emph{system of generators} of $M$. Moreover, if $M \neq \langle B \rangle$ for every $B \subsetneq A$, then we say that $A$ is a \emph{minimal system of generators} of $M$. In \cite[Corollary 2.8]{libro} it is shown that every submonoid of $(\mathbb{N},+)$ has a unique minimal system of generators which is also finite. We write $\operatorname{msg}(M)$ for the minimal system of generators of $M$. The cardinality of $\operatorname{msg}(M)$ is called the \emph{embedding dimension} of $M$ and is denoted $\operatorname{e}(M)$.

A \emph{numerical semigroup}, $S$, is a submonoid of $(\mathbb{N},+)$ such that $\mathbb{N} \setminus S$ is finite. The set of numerical semigroups is denoted by $\mathscr{S}$. In \cite[Lemma 2.1]{libro} it is shown that a submonoid $M$ of $(\mathbb{N}, +)$ is a numerical semigroup if and only if $\gcd(\operatorname{msg}(M)) = 1$. 

If $S$ is a numerical semigroup, then both $\operatorname{F}(S) = \max(\mathbb{Z} \setminus S)$ and $\operatorname{g}(S) = \#(\mathbb{N} \setminus S)$, where $\#$ stands for cardinality, are two important invariants of $S$ called \emph{Frobenius number} and \emph{genus}, respectively.

The Frobenius problem consists in finding formulas for the Frobenius number and the genus of a numerical semigroup in terms of its minimal system of generators (see \cite{alfonsin}). This problem is solved in \cite{sylvester} for numerical semigroups of embedding dimension equal to two, remaining open, in general, for numerical semigroups of embedding dimension greater than or equal to three.
 
Let $T$ be a numerical semigroup. Given $d \in \mathbb{N} \setminus \{0\}$, we write \[\frac{T}d = \{x \in \mathbb{N} \mid d\, x \in T\}.\]  In \cite{proportional} it is proved that $\frac{T}{d}$ is a numerical semigroup. This semigroup is called the \emph{quotient of $T$ by $d$}.
In this case, we also say that $T$ is a \emph{$d-$multiple of $\frac{T}{d}$}. 

Correspondingly, given two numerical semigroups $S$ and $T$, we say that $T$ is a multiple of $S$ if there exists $d \in \mathbb{N} \setminus \{0\}$ such that $\frac{T}d = S$; in particular, $S$ is an arithmetic extension of $T$ (see \cite{ojeda}). 

This paper is devoted to the study of multiples of a numerical semigroup. The interest of this study is partially motivated by the problem that we present below. In \cite{full} it is shown that a numerical semigroup is proportionally modular if and only if it has a multiple of embedding dimension two; this result together with those in \cite{theset} allow us to give an algorithmic procedure for deciding whether a numerical semigroup has a multiple of embedding dimension two. In \cite{CIM} the problem of finding a numerical semigroup that has no multiples of embedding dimension three arises. In \cite{canadian} the existence of such semigroups is proved although no concrete example is given. Recently, in \cite[Theorem 3.1]{australia}, examples of numerical semigroups that have no multiples of embedding dimension $k$ are given for (fixed) $k \geq 3$.

Today it is still an open problem to give an algorithmic procedure that determines whether a given numerical semigroup has a multiple with embedding dimension greater than or equal to three. This problem is part of the more ambitious goal of determining \[\min\{\operatorname{e}(T) \mid T\ \text{is a multiple of}\ S\}\] for a given numerical semigroup $S$. This number is called \emph{quotient rank of $S$} (see \cite{australia}). It is clearly bounded above by $\operatorname{e}(S)$, since $S/1 = S$. Consequently, $S$ is said to have \emph{full quotient rank} when the quotient rank of $S$ is equal to $\operatorname{e}(S)$.

Let $S$ be a numerical semigroup and $d \in \mathbb{N} \setminus \{0\}$. We define \[\operatorname{M}_d(S) := \left\{T \in \mathscr{S} \mid \frac{T}d = S\right\}\] and write $\max \operatorname{M}_d(S)$ for the set of maximal elements of $\operatorname{M}_d(S)$ with respect to the inclusion. 

In Section \ref{Sect2}, we prove that $\max \operatorname{M}_d(S)$ is finite. Moreover, we prove that there is a surjective map $\Theta_S^d : \operatorname{M}_d(S) \to \max \operatorname{M}_d(S)$. Thus, to compute $\operatorname{M}_d(S)$ it is enough to determine the fibers of $\Theta_S^d$ as long as $\max \operatorname{M}_d(S)$ is known. The task of computing $ \max \operatorname{M}_d(S)$ can be performed following \cite{aequationes}, the details are briefly discussed in Section \ref{Sect2} and a draft code for computing $ \max \operatorname{M}_d(S)$ is also provided. We finish this section by showing that the necessary and sufficient condition for $S$ to be irreducible is that all elements of $ \max \operatorname{M}_d(S)$ are irreducible (independently of $d$, in fact).

In Section \ref{Sect3}, we deal with the fibers of the map $\Theta_S^d$. Although the number of fibers is finite, their cardinality is not necessarily finite (see \cite{swanson}). So, we arrange the elements in each fiber of $\Theta_S^d$ is form of a rooted tree with root the corresponding element in $\max \operatorname{M}_d(S)$. This tree is locally finite, that is, the number of children of a given node is finite. Then, after characterizing the children of a given node, we can formulate a recursive process that will allow us to theoretically compute the elements of a fiber of $\Theta_S^d$ from the corresponding element in $\max \operatorname{M}_d(S)$. We also show that leaves (childless nodes) and fibers of cardinality one can appear. We emphasize that our recursive process can be easily truncated so as not to exceed some given Frobenius number or genus, thus producing a true algorithm.

In Section \ref{Sect4}, we see that intersection of finitely many elements in $\operatorname{M}_d(S)$ is an element of $\operatorname{M}_d(S)$; this is not true for infinitely many elements, giving rise the notion of $\operatorname{M}_d(S)-$monoid. We prove that this particular family of submonoids of $(\mathbb{N}, +)$ are precisely those generated by $\{\langle X \rangle + d\, S \mid X \subset S, X\ \text{is finite and}\ \langle X \rangle \cap (d (\mathbb{N} \setminus S)) = \varnothing\};$ moreover, if $\gcd(X \cup \{d\}) = 1$, then a numerical semigroup is obtained, and vice versa.
Then, given a $\operatorname{M}_d(S)-$monoid, $M$, we prove there exists a unique $X \subset S$ such that $M = \langle X \rangle + d\, S$ and $M \neq \langle Y \rangle + dS$ for every $Y \subsetneq X$, and we introduce the notion of the $\operatorname{M}_d(S)-$embedding dimension of $M$ as the cardinality of $X$.

In Section \ref{Sect5}, we focus our attention on the elements of $\operatorname{M}_d(S)$ of $\operatorname{M}_d(S)-$em\-bedding dimension one, we  prove that these are a kind of generalization of a gluing of $S$ and $\mathbb{N}$. (see \cite[Chapter 8]{libro} for more details on gluings). We solve the Frobenius problem for this family of numerical semigroups, as well as providing formulas for their pseudo-Frobenius numbers and type. 

Finally, in section \ref{Sect6} we give a sufficient condition for a numerical semigroup to have full quotient rank. We also give new examples of families with this property.

\section{The set $\max \operatorname{M}_d(S) $}\label{Sect2}

\begin{definition}
Let $S$ and $T$ be numerical semigroups and $d \in \mathbb{N} \setminus \{0\}$. We say that $T$ is a \emph{$d-$multiple of $S$} if $\frac{T}d = S$.
\end{definition}

Alternatively, we have the following equivalent and easy-to-prove definition that one semigroup is a multiple of another.

\begin{lemma}\label{lema2}
Let $S$ and $T$ be two numerical semigroups. Given $d \in \mathbb{N} \setminus \{0\}$, one has that $T$ is $d-$multiple of $S$ if and only if \[d\, (\mathbb{N} \setminus S) \subseteq \mathbb{N} \setminus T \subseteq \mathbb{N} \setminus d\, S.\] Specifically, in this case, $\operatorname{F}(T) \geq d \operatorname{F}(S)$.
\end{lemma}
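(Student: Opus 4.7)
The plan is to unpack the definition $T/d = S$ into an elementwise biconditional and show that each of the two implications is equivalent to one of the two inclusions in the displayed chain.

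First I would observe that, by definition, $T/d = S$ means that for every $x \in \mathbb{N}$ one has $dx \in T$ if and only if $x \in S$. Splitting the biconditional, the forward implication ($x \in S \Rightarrow dx \in T$) says exactly that $dS \subseteq T$, which, taking complements in $\mathbb{N}$, is equivalent to $\mathbb{N} \setminus T \subseteq \mathbb{N} \setminus dS$. The reverse implication (its contrapositive being $x \notin S \Rightarrow dx \notin T$) says that $d(\mathbb{N} \setminus S) \cap T = \varnothing$, i.e.\ $d(\mathbb{N} \setminus S) \subseteq \mathbb{N} \setminus T$ (noting that $d(\mathbb{N} \setminus S) \subseteq \mathbb{N}$). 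Conjoining the two inclusions yields the displayed chain, and conversely the chain yields both implications, so the equivalence is established.

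For the final assertion, I would simply plug $\operatorname{F}(S)$ into the first inclusion: since $\operatorname{F}(S) \in \mathbb{N} \setminus S$ (using that $\operatorname{F}(S) \geq 0$ whenever $S \neq \mathbb{N}$; if $S = \mathbb{N}$ then $\operatorname{F}(S) = -1$ and the inequality $\operatorname{F}(T) \geq d\operatorname{F}(S)$ is trivial because $\operatorname{F}(T) \geq -1$), we obtain $d\,\operatorname{F}(S) \in d(\mathbb{N} \setminus S) \subseteq \mathbb{N} \setminus T \subseteq \mathbb{Z} \setminus T$, so by maximality $\operatorname{F}(T) \geq d\,\operatorname{F}(S)$.

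There is no real obstacle here; the only thing to watch is the direction of the inclusion after taking complements and the edge case $S = \mathbb{N}$ when invoking the Frobenius number. The argument is entirely formal once the definition of $T/d$ is unwound.
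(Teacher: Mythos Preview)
Your proof is correct and is exactly the routine unwinding the paper has in mind: the lemma is stated in the paper as ``easy-to-prove'' and no proof is actually supplied, so there is nothing further to compare.
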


Let $S$ and $T$ be two numerical semigroups. Given $d \in \mathbb{N} \setminus \{0\}$, we write $\operatorname{M}_d(S)$ for the set of all numerical semigroups $d-$multiples of $S$, which, by Lemma \ref{lema2}, is equal to \[\left\{T \in \mathscr{S}\  \mid\ d\, (\mathbb{N} \setminus S) \subseteq \mathbb{N} \setminus T \subseteq \mathbb{N} \setminus d\, S \right\}.\] It is known that there are infinitely many elements in the set $\operatorname{M}_d(S)$ (see, e.g., \cite{swanson}).

\begin{notation}
Given $S \in \mathscr{S}$ and $d \in \mathbb{N} \setminus \{0\}$, we write $\max \operatorname{M}_d(S) $ for the set of maximal elements of $\operatorname{M}_d(S)$ with respect to inclusion.  
\end{notation}

Let us prove that $\max \operatorname{M}_d(S) $ is a non-empty set with finitely many elements, for every $S \in \mathscr{S}$ and $d \in \mathbb{N} \setminus \{0\}$. But first, we need to see some previous results on $\operatorname{M}_d(S)$.

\begin{proposition}\label{prop3}
Let $S$ be a numerical semigroup and $d \in \mathbb{N} \setminus \{0\}$. If $\{T, T'\} \subseteq \operatorname{M}_d(S), T \subsetneq T'$ and $f = \max(T' \setminus T)$, then $T \cup \{f\} \in \operatorname{M}_d(S)$.
\end{proposition}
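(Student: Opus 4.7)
The plan is to verify directly from Lemma~\ref{lema2} that the set $U := T \cup \{f\}$ belongs to $\operatorname{M}_d(S)$. That is, I must show (a) $U$ is a numerical semigroup, and (b) $d(\mathbb{N}\setminus S) \subseteq \mathbb{N}\setminus U \subseteq \mathbb{N}\setminus dS$. Note first that $f$ is well defined because $T \subsetneq T'$ and $T'\setminus T \subseteq \mathbb{N}\setminus T$ is a non-empty finite set, and that $f \notin T$ by construction.

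For (a), the conditions $0 \in U$ and $\#(\mathbb{N}\setminus U) < \infty$ are immediate from the corresponding properties of $T$, together with $\mathbb{N}\setminus U = (\mathbb{N}\setminus T)\setminus\{f\}$. The substantive step is closure under addition. Here I would exploit the maximality of $f$ crucially: for any $t \in T$, one has $t + f \in T'$ because $T \cup \{f\} \subseteq T'$ and $T'$ is a semigroup; moreover, if $t \neq 0$ then $t+f > f$, so maximality of $f$ in the set $T'\setminus T$ forces $t+f \in T \subseteq U$. The boundary cases $0+f = f \in U$ and $f+f = 2f$ (with $2f > f$, hence $2f \in T$) are dispatched by the same reasoning, while sums of two elements of $T$ stay in $T$ trivially.

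For (b), the inclusion $\mathbb{N}\setminus U \subseteq \mathbb{N}\setminus dS$ is automatic from $U \supseteq T$ and $T \in \operatorname{M}_d(S)$ (applying Lemma~\ref{lema2} to $T$). For the opposite inclusion I would use $U \subseteq T'$ to get $d(\mathbb{N}\setminus S) \cap U \subseteq d(\mathbb{N}\setminus S) \cap T' = \varnothing$, where the last equality comes from Lemma~\ref{lema2} applied to $T' \in \operatorname{M}_d(S)$.

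The argument is short and I do not expect a genuine obstacle; the only idea that has to be in place is that adding the \emph{largest} missing element of a bigger $d$-multiple is precisely the operation that preserves closure under addition, because any strictly larger sum is already trapped in $T$ by the maximality of $f$ in $T' \setminus T$.
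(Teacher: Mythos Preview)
Your proof is correct and follows essentially the same approach as the paper: both use the sandwich $T \subseteq T\cup\{f\} \subseteq T'$ together with the membership of $T$ and $T'$ in $\operatorname{M}_d(S)$. The only cosmetic differences are that the paper cites \cite[Lemma~4.35]{libro} for part~(a) (which you reprove directly) and phrases part~(b) via the quotient inclusion $S=\frac{T}{d}\subseteq\frac{T\cup\{f\}}{d}\subseteq\frac{T'}{d}=S$ rather than via complements.
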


\begin{proof}
By \cite[Lemma 4.35]{libro}, we have that $T \cup \{f\}$ is a numerical semigroup. So, it suffices to see that $\frac{T}d = S$. Now, since $T \subsetneq T \cup \{f\} \subseteq T'$, we have that $\mathbb{N} \setminus T' \subseteq \mathbb{N} \setminus (T \cup \{f\}) \subseteq \mathbb{N} \setminus T$. Thus, by Lemma \ref{lema2}, we conclude that \[S = \frac{T}d \subseteq \frac{T \cup \{f\}}d \subseteq \frac{T'}d = S\] and we are done.
\end{proof}

\begin{lemma}\label{lema7}
Let $S \neq \mathbb{N}$ be a numerical semigroup and $d \in \mathbb{N} \setminus \{0\}$. If $T \in \operatorname{M}_d(S)$ and $T \not\in \max \operatorname{M}_d(S) $, then there exists \begin{equation}\label{theta}\theta_S^d(T) := \max\left\{ x \in \mathbb{N} \setminus T \mid T \cup \{x\} \in \operatorname{M}_d(S)\right\}.\end{equation}
\end{lemma}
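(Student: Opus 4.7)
The plan is to show that the set on the right-hand side of (\ref{theta}) is a non-empty finite subset of $\mathbb{N}$, so that its maximum exists.

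First I would establish non-emptiness using Proposition \ref{prop3}. Since $T \in \operatorname{M}_d(S)$ but $T \not\in \max\operatorname{M}_d(S)$, there is some $T' \in \operatorname{M}_d(S)$ with $T \subsetneq T'$. The set $T' \setminus T$ is contained in $\mathbb{N} \setminus T$, which is finite because $T$ is a numerical semigroup, and it is non-empty because the inclusion is strict. Hence $f := \max(T' \setminus T)$ is well defined, and Proposition \ref{prop3} gives $T \cup \{f\} \in \operatorname{M}_d(S)$. Since $f \in \mathbb{N} \setminus T$, this shows that $f$ belongs to the set appearing in the definition of $\theta_S^d(T)$.

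Next I would handle the bound. Every element $x$ of the set in (\ref{theta}) lies in $\mathbb{N} \setminus T$, which is a finite set because $T$ is a numerical semigroup (and, if needed, the assumption $S \neq \mathbb{N}$ ensures $T \neq \mathbb{N}$ so this set is non-trivial). Thus the set in (\ref{theta}) is a non-empty subset of the finite set $\mathbb{N} \setminus T$, and any non-empty finite subset of $\mathbb{N}$ admits a maximum.

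There is no real obstacle here: the content of the lemma is essentially bookkeeping, with the only non-trivial input being Proposition \ref{prop3}, which guarantees that adjoining the largest ``missing'' element of an enlarging semigroup $T' \supsetneq T$ keeps us inside $\operatorname{M}_d(S)$. This shows the maximum candidate $f$ constructed from any witness $T'$ actually lies in the set, so the maximum on the right-hand side of (\ref{theta}) is well defined.
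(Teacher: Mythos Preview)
Your proof is correct and matches the paper's own argument essentially step for step: both use Proposition~\ref{prop3} to exhibit an element $f = \max(T' \setminus T)$ in the set, and both conclude by noting that the set is a non-empty subset of the finite set $\mathbb{N} \setminus T$.
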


\begin{proof}
Since $T \not\in \max \operatorname{M}_d(S) ,$ there exists $T' \in \operatorname{M}_d(S)$ such that $T \subsetneq T'$. Let $f = \max(T' \setminus T)$, by Proposition \ref{prop3}, we have that $f \in \left\{ x \in \mathbb{N} \setminus T \mid T \cup \{x\} \in \operatorname{M}_d(S)\right\}$. So, $\left\{ x \in \mathbb{N} \setminus T \mid T \cup \{x\} \in \operatorname{M}_d(S)\right\}$ is non-empty and it has a maximum because it is a subset of $\mathbb{N} \setminus T$ which is a finite set.
\end{proof}

Observe that, by Lemma \ref{lema2}, we have that $\theta_S^d(T) \not\in d\, S,$ for every $T \in \operatorname{M}_d(S), S \in \mathscr{S}$ and $d \in \mathbb{N} \setminus \{0\}$. In the next section we will discuss the properties of $\theta_S^d(-)$ in more detail.

\begin{proposition}
The set $\max \operatorname{M}_d(S) $ is not empty, for every $S \in \mathscr{S}$ and $d \in \mathbb{N} \setminus \{0\}$.
\end{proposition}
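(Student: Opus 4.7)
The strategy is a straightforward well-foundedness argument built on Lemma \ref{lema7}: start from any element of $\operatorname{M}_d(S)$ and repeatedly enlarge it using the operator $\theta_S^d$ until no further enlargement is possible. The terminal element of such a chain is automatically maximal.

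First I would dispose of the trivial case $S = \mathbb{N}$: here $\mathbb{N} \in \operatorname{M}_d(\mathbb{N})$ and, being the largest numerical semigroup, is clearly a maximal element, so $\max\operatorname{M}_d(\mathbb{N}) \neq \varnothing$. From now on assume $S \neq \mathbb{N}$. I would then pick any $T_0 \in \operatorname{M}_d(S)$, which exists since the paper has already recalled that $\operatorname{M}_d(S)$ is actually infinite (citing \cite{swanson}); one can also exhibit an explicit witness such as $T_0 = dS \cup \{n \in \mathbb{N} \mid n > d\operatorname{F}(S)\}$, which a direct check shows to be a numerical semigroup satisfying $T_0/d = S$.

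Now iterate: if $T_i \in \max\operatorname{M}_d(S)$ the construction halts; otherwise Lemma \ref{lema7} guarantees that $\theta_S^d(T_i)$ is defined, and setting
\[
T_{i+1} \,:=\, T_i \cup \{\theta_S^d(T_i)\}
\]
gives an element of $\operatorname{M}_d(S)$ that strictly contains $T_i$ by exactly one new integer, so that $\operatorname{g}(T_{i+1}) = \operatorname{g}(T_i) - 1$.

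The only point to check—and the entire content of the argument—is that the procedure must terminate. Since $\operatorname{g}(T_0)$ is a non-negative integer and the genus drops by one at each step, the chain $T_0 \subsetneq T_1 \subsetneq T_2 \subsetneq \cdots$ can have length at most $\operatorname{g}(T_0)$, forcing some $T_n$ to lie in $\max\operatorname{M}_d(S)$. I foresee no genuine obstacle: the substantive work has already been carried out in Proposition \ref{prop3} and Lemma \ref{lema7}, and the present proposition is essentially an immediate corollary of them.
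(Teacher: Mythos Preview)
Your proposal is correct and follows essentially the same approach as the paper: iterate $T_{i+1}=T_i\cup\{\theta_S^d(T_i)\}$ starting from some $T_0\in\operatorname{M}_d(S)$ and use finiteness of the gap set (you phrase it via the genus decrease) to force termination at a maximal element. Your treatment is in fact slightly more careful, since you separate out the case $S=\mathbb{N}$ (needed because Lemma~\ref{lema7} assumes $S\neq\mathbb{N}$) and you make the nonemptiness of $\operatorname{M}_d(S)$ explicit, whereas the paper leaves both points implicit.
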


\begin{proof}
Let $S \in \mathscr{S}$. If $T \in \operatorname{M}_d(S)$, then we define the following sequence: $T_0 = T$ and \begin{equation}\label{ecu1} T_{i+1} = \left\{\begin{array}{cc} T_i \cup \{\theta_S^d(T_i)\} & \text{if}\ T_i \not\in \max \operatorname{M}_d(S) ; \smallskip \\T_i & \text{otherwise}, \end{array} \right.\end{equation} for each $i \in \mathbb{N}$. Then, since $T = T_0 \subseteq \ldots \subseteq T_i \subseteq \ldots $ and $\mathbb{N} \setminus T$ is finite, we conclude that there exits $k \in \mathbb{N}$ such that $T_k = T_{k+i},$ for every $i \in \mathbb{N}$. Necessarily, by construction, $T_k \in \max \operatorname{M}_d(S) $.
\end{proof}

Let us prove now that $\max \operatorname{M}_d(S) $ has finite cardinality for every $S \in \mathscr{S}$ and $d \in \mathbb{N} \setminus \{0\}$.

\begin{lemma}\label{lema4}
Let $S$ be a numerical semigroup and $d \in \mathbb{N} \setminus \{0\}$. If $T \in \operatorname{M}_d(S)$, then $T \cup \{d \operatorname{F}(S) + i \mid i \in \mathbb{N} \setminus \{0\} \} \in \operatorname{M}_d(S)$. 
\end{lemma}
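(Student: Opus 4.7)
The plan is to set $A := \{dF(S)+i \mid i \in \mathbb{N}\setminus\{0\}\} = \{dF(S)+1, dF(S)+2, \dots\}$ and $T' := T \cup A$, and to verify the two conditions needed to conclude $T' \in \mathrm{M}_d(S)$: namely, that $T'$ is a numerical semigroup, and that it satisfies the two-sided inclusion of Lemma \ref{lema2} with respect to $S$ and $d$.

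For the first condition, I would check closure under addition by cases. If $a,b \in T$, then $a+b\in T\subseteq T'$ because $T$ is a numerical semigroup. If at least one of $a,b$ lies in $A$, then $a+b > dF(S)$, so $a+b \in A \subseteq T'$. Since $0 \in T \subseteq T'$ and $\mathbb{N}\setminus T' \subseteq \mathbb{N}\setminus T$ is finite, $T'$ is a numerical semigroup.

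For the second condition, I would apply Lemma \ref{lema2}. The upper inclusion $\mathbb{N}\setminus T' \subseteq \mathbb{N}\setminus dS$ is immediate from $T\subseteq T'$ and the fact that $T\in \mathrm{M}_d(S)$ already satisfies $\mathbb{N}\setminus T \subseteq \mathbb{N}\setminus dS$. For the lower inclusion $d(\mathbb{N}\setminus S) \subseteq \mathbb{N}\setminus T'$, take any $n \in \mathbb{N}\setminus S$; by definition of the Frobenius number, $n \leq F(S)$, hence $dn \leq dF(S)$, which shows $dn \notin A$. On the other hand, $dn \notin T$ because $T \in \mathrm{M}_d(S)$ forces $d(\mathbb{N}\setminus S)\subseteq \mathbb{N}\setminus T$ by Lemma \ref{lema2}. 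Therefore $dn \notin T \cup A = T'$, as required.

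There is no real obstacle here; the argument is essentially a bookkeeping exercise. The only point that has to be used with any care is the observation that the added set $A$ consists exclusively of integers strictly greater than $dF(S)$, which is precisely what guarantees that no element of the form $dn$ with $n$ a gap of $S$ can accidentally be pulled into $T'$. This is why the threshold $dF(S)$ (rather than any smaller one) is exactly the right choice.
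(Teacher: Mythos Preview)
Your proof is correct and follows essentially the same route as the paper's: both verify that $T'=T\cup A$ is a numerical semigroup and then use Lemma~\ref{lema2} twice, once for each inclusion $d(\mathbb{N}\setminus S)\subseteq \mathbb{N}\setminus T'\subseteq \mathbb{N}\setminus dS$. The only difference is cosmetic: you spell out the closure-under-addition case analysis where the paper simply writes ``clearly'', and you phrase the key observation as $dn\le dF(S)\Rightarrow dn\notin A$, whereas the paper phrases the same fact as $d(\mathbb{N}\setminus S)\subseteq \mathbb{N}\setminus A$ and then intersects with $\mathbb{N}\setminus T$.
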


\begin{proof}
Clearly, $T' = T \cup \{d \operatorname{F}(S) + i \mid i \in \mathbb{N} \setminus \{0\} \}$ is a numerical semigroup. Now, since $\operatorname{F}(S) = \max(\mathbb{N} \setminus S)$, we have that $d\,(\mathbb{N} \setminus S) \subseteq \mathbb{N} \setminus \{d \operatorname{F}(S) + i \mid i \in \mathbb{N} \setminus \{0\} \}$. Therefore, by Lemma \ref{lema2}, we obtain that $\mathbb{N} \setminus T' \subseteq \mathbb{N} \setminus T \subseteq \mathbb{N} \setminus d\, S$ and that \begin{align*}
\mathbb{N} \setminus T' & = \mathbb{N} \setminus (T \cup \{d \operatorname{F}(S) + i \mid i \in \mathbb{N} \setminus \{0\} \})\\ & = (\mathbb{N} \setminus T) \cap  (\mathbb{N} \setminus \{d \operatorname{F}(S) + i \mid i \in \mathbb{N} \setminus \{0\} \}) \supseteq d\,(\mathbb{N} \setminus S).  
\end{align*}
Now, by Lemma \ref{lema2} again, we conclude that $T' \in \operatorname{M}_d(S)$.
\end{proof}

\begin{proposition}\label{prop8I}
Let $S \in \mathscr{S} \setminus \mathbb{N}$ and $d \in \mathbb{N} \setminus \{0\}$. If $T \in \max \operatorname{M}_d(S) $ then $\operatorname{F}(T) = d \operatorname{F}(S)$. In particular, $\max \operatorname{M}_d(S) $ has finite cardinality.
\end{proposition}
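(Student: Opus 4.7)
The plan is to use Lemma \ref{lema4} together with the maximality of $T$ to pin down the Frobenius number exactly, and then deduce finiteness by bounding $T$ between two explicit finite-data sets.

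First, for the equality $\operatorname{F}(T)=d\operatorname{F}(S)$: Lemma \ref{lema2} already gives the inequality $\operatorname{F}(T)\geq d\operatorname{F}(S)$, so I only need the reverse inequality. To get it, I would apply Lemma \ref{lema4} to $T$, which produces the semigroup $T' := T\cup\{d\operatorname{F}(S)+i\mid i\in\mathbb{N}\setminus\{0\}\}\in\operatorname{M}_d(S)$. Clearly $T\subseteq T'$, and since $T\in\max\operatorname{M}_d(S)$, maximality forces $T=T'$. In particular every integer strictly greater than $d\operatorname{F}(S)$ lies in $T$, so $\operatorname{F}(T)\leq d\operatorname{F}(S)$. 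Combining the two inequalities gives $\operatorname{F}(T)=d\operatorname{F}(S)$, as desired.

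Second, for finiteness of $\max\operatorname{M}_d(S)$: every $T\in\max\operatorname{M}_d(S)$ satisfies, by Lemma \ref{lema2} and the first part,
\[
d\,(\mathbb{N}\setminus S)\;\subseteq\;\mathbb{N}\setminus T\;\subseteq\;\{0,1,\ldots,d\operatorname{F}(S)\}\setminus d\,S.
\]
The right-hand set is a fixed finite subset of $\mathbb{N}$ (it depends only on $S$ and $d$), so $\mathbb{N}\setminus T$ ranges over a finite collection of subsets of a finite set, hence $T$ itself ranges over a finite collection. This gives the cardinality bound, and completes the proof.

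The only delicate point is the inclusion $T\supseteq\{d\operatorname{F}(S)+i\mid i\geq 1\}$ used in the first step; this is exactly the content of Lemma \ref{lema4}, so no real obstacle remains. The hypothesis $S\neq\mathbb{N}$ is used to ensure $\operatorname{F}(S)$ makes sense (otherwise the statement is vacuous since $\operatorname{M}_d(\mathbb{N})=\{\mathbb{N}\}$).
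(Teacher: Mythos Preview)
Your proof is correct and follows essentially the same approach as the paper: both use Lemma~\ref{lema4} together with maximality to force $\mathbb{N}\setminus T\subseteq\{1,\ldots,d\operatorname{F}(S)\}$, then combine with the bound $\operatorname{F}(T)\geq d\operatorname{F}(S)$ from Lemma~\ref{lema2} to obtain both the equality and the finiteness. The paper's version is more terse (it does not spell out the step $T=T'$ from maximality), but the logic is identical.
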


\begin{proof}
If $T \in \max \operatorname{M}_d(S) $, by Lemma \ref{lema4}, we have that $\mathbb{N} \setminus T \subseteq \{1, \ldots, d \operatorname{F}(S)\}$.
Now since, by Lemma \ref{lema2}, $\operatorname{F}(T) \geq d \operatorname{F}(S)$, we conclude both that $\operatorname{F}(T) = d \operatorname{F}(S)$ and that $\max \operatorname{M}_d(S) $ is finite.
\end{proof}

We note that the converse of the first statement in the above proposition is not true. 

\begin{example}\label{Ex9I}
Let $S = \langle 3,4,5 \rangle$ and $d = 3$. If $T=  \langle 4,7,9,10 \rangle$ and $T' = \langle 4,5,7 \rangle$, then one can see that $T \subsetneq T'$, that $\frac{T}d = \frac{T'}d = S$ and that $\operatorname{F}(T) = \operatorname{F}(T') = d \operatorname{F}(S) = 6.$ Therefore, $T \not\in \max \operatorname{M}_d(S) $ although has the minimum possible Frobenius number among the elements of $\operatorname{M}_d(S)$. 

Moreover, one can easily check that $\max \operatorname{M}_d(S)  = \{\langle 4,5,7 \rangle\}$ because there are only four numerical semigroups with Frobenius number equal to $6$: \[\langle 4,5,7 \rangle, \langle 4,7,9,10 \rangle, \langle 5,7,8,9,11 \rangle\ \text{and}\ \langle 7,8,9,10,11,12,13 \rangle,\] all of them contained in $\langle 4,5,7 \rangle$.
\end{example}

Once we know that $\max \operatorname{M}_d(S) $ is a finite set, we can take advantage of the algorithms in \cite{aequationes} to compute it, simply by noting, by Lemma \ref{lema2}, that \[\max(\operatorname {M} _d(S)) = \max \{T \in \mathscr{S} \mid d\, S \subseteq T\ \text{and}\ T \cap (d (\mathbb{N} \setminus S)) = \varnothing\}\] and that the right hand side is nothing but $\mathscr{M}(d\, S, d (\mathbb{N} \setminus S))$ in the notation of \cite{aequationes}.

\begin{remark}\label{Rem10I}
Since $\max \operatorname{M}_d(S)$ is contained in the set of numerical semigroups with Frobenius number $d \operatorname{F}(S)$, we can use the GAP (\cite{GAP}) package \texttt{NumericalSgps} (\cite{numericalsgps}) to compute $\max \operatorname{M}_d(S)$ for given $S$ and $d$. This method is not very efficient since it requires the computation of all numerical semigroups with Frobenius number $d \operatorname{F}(S)$.

For example, if we want to compute $\max \operatorname{M}_d(S)$ for $S = \langle 3,5,7 \rangle$ and $d=3$ we can use the following GAP code:
\begin{verbatim}
   LoadPackage("NumericalSgps");
   S:=NumericalSemigroup(3,5,7);
   d:=Int(3);
   FS:=FrobeniusNumber(S);
   L:=NumericalSemigroupsWithFrobeniusNumber(d*FS);;
   L:=Filtered(L,T->QuotientOfNumericalSemigroup(T,d)=S);;
   M:=[];
   for T in L do
      aux:=Difference(L,[T]);
      control:=Set(aux,R->IsSubsemigroupOfNumericalSemigroup(R,T));
      if control=[false] then Append(M,[T]); fi;
   od;
   M;
   List(M,i->MinimalGenerators(i));
\end{verbatim}
In this way, we obtain that $\max \operatorname{M}_d(S)$ is equal to
\[
\{ 
\langle 5, 8, 9, 11 \rangle, \langle 7, 8, 9, 10, 11, 13 \rangle
\}
\]
for $S = \langle 3,5,7 \rangle$ and $d=5$.
\end{remark}

Now, given $S \in \mathscr{S}, d \in \mathbb{N} \setminus \{0\}$ and $T \in \operatorname{M}_d(S)$, we write $\Theta_S^d(T)$ for the element of $\max \operatorname{M}_d(S) $ produced by the sequence \eqref{ecu1} for $T_0 = T$, and define the map \begin{equation}\label{Theta}\Theta_S^d : \operatorname{M}_d(S) \longrightarrow \max \operatorname{M}_d(S) ;\ T \mapsto \Theta_S^d (T) \end{equation} which is clearly surjective, giving rise the equivalence relation 
\[T \sim T' \Longleftrightarrow \Theta_S^d(T) = \Theta_S^d(T')\] on $\operatorname{M}_d(S)$, so that $\max \operatorname{M}_d(S)  = \operatorname{M}_d(S)/\sim$. Therefore, to compute $\operatorname{M}_d(S)$, it is enough to know what the fibers of $\Theta_S^d$ are like.

Before dealing with the fibers of $\Theta_S^d$, let us look at an interesting result that relates the irreducibility of $S$ to that of the elements in $\max \operatorname{M}_d(S)$.

\begin{remark}\label{RemIrr}
Recall that a numerical semigroup is \emph{irreducible} if it cannot be written as the intersection of two numerical semigroups properly containing it. Recall also that a numerical semigroup is symmetric (pseudo-symmetric, resp.) if it is irreducible with odd (even, resp.) Frobenius number (see \cite[Chapter 4]{libro} for more details).
\end{remark}

\begin{proposition}
Let $S$ be a numerical semigroup and $d \in \mathbb{N} \setminus \{0\}$. Then $S$ is irreducible if and only if every $T \in \max \operatorname{M}_d(S)$ is irreducible. 
\end{proposition}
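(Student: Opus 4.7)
The plan is to prove both implications by contraposition, using Proposition \ref{prop8I} (so that $\operatorname{F}(T)=d\operatorname{F}(S)$ for every $T\in\max\operatorname{M}_d(S)$) together with the standard irreducibility criterion of \cite[Chapter~4]{libro}: a numerical semigroup $N$ is irreducible if and only if for every $h\in\mathbb{Z}\setminus N$ with $2h\neq\operatorname{F}(N)$ one has $\operatorname{F}(N)-h\in N$.

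For the forward direction, assume some $T\in\max\operatorname{M}_d(S)$ is reducible, so $T=T_1\cap T_2$ with numerical semigroups $T_i\supsetneq T$. By maximality of $T$ in $\operatorname{M}_d(S)$, each $T_i$ fails to lie in $\operatorname{M}_d(S)$; yet $T_i\supseteq T$ gives $T_i/d\supseteq T/d=S$, so $T_i/d\supsetneq S$. A direct check from the definition of the quotient yields the identity $(T_1\cap T_2)/d=(T_1/d)\cap(T_2/d)$, and hence $S=(T_1/d)\cap(T_2/d)$ with both factors strictly larger than $S$. Thus $S$ is reducible.

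For the converse, assume $S$ is reducible, so by the above criterion there exists $h\in\mathbb{Z}\setminus S$ with $2h\neq\operatorname{F}(S)$ and $\operatorname{F}(S)-h\notin S$. Pick any $T\in\max\operatorname{M}_d(S)$; I claim $h':=dh$ violates the irreducibility criterion for $T$. Indeed, $h\notin S$ together with Lemma \ref{lema2} gives $h'=dh\in d(\mathbb{N}\setminus S)\subseteq\mathbb{N}\setminus T$, so $h'\notin T$. Using Proposition \ref{prop8I} to substitute $\operatorname{F}(T)=d\operatorname{F}(S)$, we obtain $\operatorname{F}(T)-h'=d(\operatorname{F}(S)-h)\in d(\mathbb{N}\setminus S)\subseteq\mathbb{N}\setminus T$, so $\operatorname{F}(T)-h'\notin T$ as well. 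Finally, $2h'=\operatorname{F}(T)$ would force $2h=\operatorname{F}(S)$, contrary to our choice of $h$. Hence $T$ is reducible, completing the contrapositive.

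The main obstacle is essentially bookkeeping: one must make sure the exceptional value $\operatorname{F}/2$ arising in the pseudo-symmetric case is correctly tracked through the rescaling by $d$, and this is handled precisely by the equivalence $2h=\operatorname{F}(S)\Leftrightarrow 2h'=\operatorname{F}(T)$ granted by Proposition \ref{prop8I}. Everything else — the identity for the quotient of an intersection and the inclusion $d(\mathbb{N}\setminus S)\subseteq\mathbb{N}\setminus T$ — is immediate from the definitions and Lemma \ref{lema2}.
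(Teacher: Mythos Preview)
Your proof is correct. The converse direction is essentially the paper's argument run in contrapositive form, using the same criterion from \cite[Proposition~4.4]{libro} and the same rescaling $h\mapsto dh$ together with $\operatorname{F}(T)=d\operatorname{F}(S)$.

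The forward direction, however, is genuinely different. The paper argues via the characterization of \cite[Theorem~4.2]{libro} (a numerical semigroup is irreducible iff it is maximal among those with the same Frobenius number): from a non-irreducible $T\in\max\operatorname{M}_d(S)$ it produces some $T'\supsetneq T$ with $\operatorname{F}(T')=d\operatorname{F}(S)$, checks that $T'/d$ has Frobenius number $\operatorname{F}(S)$ and contains $S$, and then invokes irreducibility of $S$ (again via Theorem~4.2) to force $T'/d=S$, contradicting maximality of $T$. Your route is more direct: you take an explicit decomposition $T=T_1\cap T_2$, use maximality only to get $T_i/d\supsetneq S$, and then apply the elementary identity $(T_1\cap T_2)/d=(T_1/d)\cap(T_2/d)$ to transport the decomposition down to $S$. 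This avoids the Frobenius-number bookkeeping and the double appeal to Theorem~4.2, at the cost of needing the quotient-of-intersection identity --- which, as you note, is immediate from the definition and is in fact stated at the opening of Section~\ref{Sect4}. Both arguments are short; yours is arguably the more transparent of the two.
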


\begin{proof}
First, suppose that $S$ is irreducible and let $T \in \max \operatorname{M}_d(S)$. By Lemma \ref{lema2}, $\operatorname{F}(T) = d \operatorname{F}(S)$. If $T$ is not irreducible, then, by \cite[Theorem 4.2]{libro}, there exists a numerical semigroup $T'$ with Frobenius number $d \operatorname{F}(S)$ strictly containing $T$. Now, since $\frac{T'}d$ is a numerical semigroup with Frobenius number $\operatorname{F}(S)$ containing $S$, by \cite[Theorem 4.2]{libro} again, $S = \frac{T'}d$. Therefore $T \subsetneq T' \in \operatorname{M}_d(S)$, a contradiction with the maximality of $T$. 

Conversely, if there exists $T \in \operatorname{M}_d(S)$ irreducible with Frobenius number $d \operatorname{F}(S)$, then $S$ is irreducible. Indeed, if $x \not\in S$ and $x \neq \operatorname{F}(S)/2$, then $d x \not\in T$ and $d x \neq d \operatorname{F}(S)/2 = \operatorname{F}(T)$; therefore, by \cite[Proposition 4.4]{libro}, $d \operatorname{F}(S) - d x \in T$. Thus, $\operatorname{F}(S) - x \in S$ and, by \cite[Proposition 4.4]{libro} again, we conclude that $S$ is irreducible.
\end{proof}

This result allows us to update the code given in Observation \ref{Rem10I} by replacing function\texttt{NumericalSemigroupsWithFrobeniusNumber} by 
\begin{center}
\texttt{IrreducibleNumericalSemigroupsWithFrobeniusNumber}
\end{center}
when $S$ is irreducible (as it is actually the case in Remark \ref{Rem10I}), thus obtaining a faster computing time.

\section{The fibers of $\Theta_S^d$}\label{Sect3}

Throughout this section, $S \neq \mathbb{N}$ and $d$ are a numerical semigroup and a positive integer, respectively. 

Let $\theta_S^d(-)$ and $\Theta_S^d(-)$ be defined as in \eqref{theta} and \eqref{Theta}, respectively.
We begin by observing that $\Theta_S^d$ has a finite number of fibers of (possibly) infinite cardinality. In fact, since by \cite{swanson} $\operatorname{M}_d(S)$ has infinite cardinality, there exists at least one $T \in \max \operatorname{M}_d(S) $ such that $ (\Theta_S^d)^{-1}(T)$ has infinite cardinality.

Given $R \in \max \operatorname{M}_d(S) $, we arrange the elements in the fiber of $R, (\Theta_S^d)^{-1}(R)$, in the form of a rooted tree  with root $R$ where the vertices are the elements of $(\Theta_S^d)^{-1}(R)$ and there is an edge joining the vertices $T$ and $T'$ if and only if $T' = T \cup \{\theta_S^d(T)\}$. We denote this tree by $\mathcal{G}^S_d(R)$, or simply by $\mathcal{G}(R)$ if there is no possibility of confusion.

The following proposition characterizes the set of children of a vertex in $\mathcal{G}(R)$ for a given $R \in \max \operatorname{M}_d(S) $.

\begin{proposition}\label{Prop10I}
Let $T \in \operatorname{M}_d(S)$. If $R = \Theta_S^d(T),$ then the set of children of $T$ in $\mathcal{G}(R)$ is equal to \begin{equation}\label{children}\{T \setminus \{x\} \subset \mathbb{N} \mid x \in \operatorname{msg}(T), x \not\in d\, S\ \text{and}\ \theta_S^d(T \setminus \{x\}) = x\}.\end{equation}
\end{proposition}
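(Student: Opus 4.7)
The plan is to establish mutual set-equality by unpacking what it means for $T'$ to be a child of $T$ in $\mathcal{G}(R)$: by construction of the tree there is an edge from $T'$ to $T$ exactly when $T' \in \operatorname{M}_d(S)$ and $T = T' \cup \{\theta_S^d(T')\}$, so a child differs from $T$ by removing a single element $x := \theta_S^d(T')$. All the work is then to characterize which one-element removals $T \setminus \{x\}$ land back in $\operatorname{M}_d(S)$ with $x$ being the maximum one could add back.

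For the forward direction, I would take a child $T'$ of $T$, set $x = \theta_S^d(T')$, and note $T' = T \setminus \{x\}$. Since $T'$ must be a numerical semigroup, the standard characterization of minimal generators (a numerical semigroup minus a single element remains a numerical semigroup iff the removed element belongs to the minimal system of generators, cf.\ \cite[Chapter 2]{libro}) forces $x \in \operatorname{msg}(T)$. Because $T' \in \operatorname{M}_d(S)$, Lemma \ref{lema2} gives $\mathbb{N}\setminus T' \subseteq \mathbb{N}\setminus d\,S$, and $x \in \mathbb{N}\setminus T'$ then forces $x \notin d\,S$. The condition $\theta_S^d(T\setminus\{x\}) = x$ is immediate from the choice of $x$.

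For the reverse direction, fix $x \in \operatorname{msg}(T)$ with $x\notin d\,S$ and $\theta_S^d(T\setminus\{x\}) = x$, and set $T' = T \setminus \{x\}$. Membership of $T'$ in $\mathscr{S}$ follows from $x$ being a minimal generator; membership in $\operatorname{M}_d(S)$ I verify via Lemma \ref{lema2} using the chain
\[
d\,(\mathbb{N}\setminus S) \subseteq \mathbb{N}\setminus T \subseteq \mathbb{N}\setminus T' = (\mathbb{N}\setminus T)\cup\{x\} \subseteq \mathbb{N}\setminus d\,S,
\]
where the last inclusion uses $\mathbb{N}\setminus T \subseteq \mathbb{N}\setminus d\,S$ (from $T\in \operatorname{M}_d(S)$) together with $x\notin d\,S$. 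Then $T'\subsetneq T$ shows $T'\notin\max\operatorname{M}_d(S)$, so $\theta_S^d(T')$ is defined, equals $x$ by hypothesis, and consequently $T'\cup\{\theta_S^d(T')\} = T$; this is the defining edge relation, and iterating $\Theta_S^d$ gives $\Theta_S^d(T') = \Theta_S^d(T) = R$, so $T'$ is indeed a vertex of $\mathcal{G}(R)$ and a child of $T$.

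The main obstacle is really just the bookkeeping at the junction between the tree's combinatorial edge relation and the set-theoretic description: one must recognize that the child is forced to be $T\setminus\{x\}$ for a single $x$ (because $\theta_S^d$ adds one element at a time) and then correctly identify ``$T\setminus\{x\}\in\mathscr{S}$'' with ``$x\in\operatorname{msg}(T)$'' and ``$T\setminus\{x\}\in\operatorname{M}_d(S)$'' with ``$x\notin d\,S$'' via Lemma \ref{lema2}. Once these equivalences are in hand, the remaining assertions reduce to a direct translation of the definitions of $\theta_S^d$ and $\Theta_S^d$.
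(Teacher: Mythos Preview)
Your proposal is correct and follows essentially the same route as the paper's proof: both directions hinge on the equivalence ``$T\setminus\{x\}\in\mathscr{S}\iff x\in\operatorname{msg}(T)$'' together with Lemma~\ref{lema2} to translate membership in $\operatorname{M}_d(S)$ into the condition $x\notin d\,S$, and then the edge relation $T=T'\cup\{\theta_S^d(T')\}$ does the rest. Your write-up is in fact slightly more explicit than the paper's (you spell out the chain of inclusions for $T'\in\operatorname{M}_d(S)$ and note that $T'\notin\max\operatorname{M}_d(S)$ so that $\theta_S^d(T')$ is defined), but there is no substantive difference in strategy.
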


\begin{proof}
First of all, we recall that the necessary and sufficient condition for $T \setminus \{x\}$ to be a numerical semigroup is that $x \in \operatorname{msg}(T)$ (see, e.g. \cite[Exercise 2.1]{libro}). So, if $T'$ is a child of $T$ in $\mathcal{G}(R)$, that is, $T = T' \cup \{\theta_S^d(T')\}$ for some $T' \in (\Theta_S^d)^{-1}(R)$, then $T' = T \setminus \{\theta_S^d(T')\}$; consequently, $\theta_S^d(T') \in \operatorname{msg}(T)$ and $\theta_S^d(T \setminus \{\theta_S^d(T')\}) = \theta_S^d(T')$. Moreover, since $T \in \operatorname{M}_d(S)$, by Lemma \ref{lema2}, we have that $\theta_S^d(T') \not\in d\, S$, and we conclude that $T'$ belongs to \eqref{children}. Conversely, if $T'$ belongs to  \eqref{children}, by our initial observation, $T'$ is a numerical semigroup; moreover, since $T' \subsetneq T' \cup \{x\} = T$ and $x \not\in d\, S$, by Lemma \ref{lema2}, we have that $T' \in \operatorname{M}_d(S)$. Finally, since $T = T' \cup \{\theta_S^d(T')\},$ then $\Theta_S^d(T) = \Theta_S^d(T') = R$ by construction, and we are done.
\end{proof}

Note that the conditions $x \in \operatorname{msg}(T)$ and $x \not\in d\, S$ in \eqref{children} are equivalent to $T \setminus \{x\} \in \operatorname{M}_d(S)$, for any $T \in (\Theta_S^d)^{-1}(R)$ and $R \in \max\operatorname{M}_d(S)$. So to have an effective way to check whether $T \setminus \{x\}$ is a child of $T$ in $\mathcal{G}(\Theta_S^d(T))$, we need a criterion to decide whether $\theta_S^d(T \setminus \{x\}) = x$ or not. To do this, we need a few more concepts and results.

Following the terminology introduced in \cite{jpaa}, an integer number $z$ is a \emph{pseudo-Frobenius number of $T \in \mathscr{S}$} if $z \not\in T$ and $z + x \in T$ for every $x \in T \setminus \{0\}$. We write $\operatorname{PF}(T)$ for the set of pseudo-Frobenius numbers of $T \in \mathscr{S}$.

\begin{proposition}\label{Prop12}
If $T \in \operatorname{M}_d(S)$ and $T \not\in \max \operatorname{M}_d(S)$, then \[\theta_S^d(T) = \max\{z \in \operatorname{PF}(T) \mid 2z \in T\ \text{and}\ z \not\in d\, (\mathbb{N} \setminus S)\}.\]
\end{proposition}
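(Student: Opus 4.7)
The plan is to rewrite the set defining $\theta_S^d(T)$ in \eqref{theta} by separately analyzing two conditions on an integer $x \in \mathbb{N} \setminus T$: (a) that $T \cup \{x\}$ is a numerical semigroup, and (b) that, given (a), the enlarged semigroup still belongs to $\operatorname{M}_d(S)$. Once these two conditions are translated into arithmetic conditions on $x$ and $T$, the formula will follow directly by taking the maximum.

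For (a), I would invoke the standard characterisation of the so-called special gaps of a numerical semigroup (see, e.g., \cite[Proposition 4.33 and Lemma 4.35]{libro}): for $x \in \mathbb{N} \setminus T$, the set $T \cup \{x\}$ is again a numerical semigroup if and only if $x \in \operatorname{PF}(T)$ and $2x \in T$. This is precisely the condition that appears inside the max in the statement, modulo the condition $z \notin d(\mathbb{N} \setminus S)$, which is what (b) will add.

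For (b), I would use Lemma \ref{lema2} applied twice. Because $T \in \operatorname{M}_d(S)$, we already have
\[ d\,(\mathbb{N} \setminus S) \subseteq \mathbb{N} \setminus T \subseteq \mathbb{N} \setminus d\,S. \]
Writing $\mathbb{N} \setminus (T \cup \{x\}) = (\mathbb{N} \setminus T) \setminus \{x\}$, the upper inclusion in Lemma \ref{lema2} is automatic, while the lower inclusion $d(\mathbb{N} \setminus S) \subseteq (\mathbb{N} \setminus T)\setminus\{x\}$ is equivalent to $x \notin d(\mathbb{N} \setminus S)$. Hence $T \cup \{x\} \in \operatorname{M}_d(S)$ if and only if $T \cup \{x\}$ is a numerical semigroup and $x \notin d(\mathbb{N} \setminus S)$.

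Putting (a) and (b) together, the set in \eqref{theta} becomes exactly $\{ z \in \operatorname{PF}(T) \mid 2z \in T \text{ and } z \notin d(\mathbb{N} \setminus S)\}$, and taking the maximum gives the claimed formula. The hypothesis $T \notin \max \operatorname{M}_d(S)$ ensures, via Lemma \ref{lema7}, that this set is non-empty, so the maximum exists. The only subtle point is citing the exact form of the special-gap characterisation in (a); no calculation is really required beyond that reference.
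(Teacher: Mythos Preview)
Your proposal is correct and follows essentially the same approach as the paper: both arguments rest on the special-gap characterisation (that $T\cup\{x\}$ is a numerical semigroup precisely when $x\in\operatorname{PF}(T)$ and $2x\in T$) together with Lemma~\ref{lema2} to handle the condition $x\notin d(\mathbb{N}\setminus S)$. The only cosmetic difference is that the paper proves $\theta_S^d(T)\le\theta$ and $\theta\le\theta_S^d(T)$ separately, whereas you show directly that the two defining sets coincide; the content is the same.
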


\begin{proof}
Set $\theta := \max\{z \in \operatorname{PF}(T) \mid 2z \in T\ \text{and}\ z \not\in d\, (\mathbb{N} \setminus S)\}$. On the one hand, since $\theta_S^d(T) = \max\{x \in \mathbb{N} \setminus T \mid T \cup \{x\} \in \operatorname{M}_d(S)\}$, we have that $\theta_S^d(T) + x \in T,$ for every $x \in T \setminus \{0\}$ and that $2\, \theta_S^d(T) \in T$; moreover, by Lemma \ref{lema2}, $\theta_S^d(T) \not\in d\, (\mathbb{N} \setminus S)$. Therefore, $\max\{x \in \mathbb{N} \setminus T \mid T \cup \{x\} \in \operatorname{M}_d(S)\}$ is non-empty and  $\theta_S^d(T) \leq \theta.$ On the other hand, since $\theta \in \operatorname{PF}(T)$ and $2 \theta \in T$, we have that $T \cup \{\theta\}$ is a numerical semigroup; moreover, since $T \in \operatorname{M}_d(S)$ and $\theta \not\in d\, (\mathbb{N} \setminus S)$, we have that $d\, (\mathbb{N} \setminus S) \subseteq \mathbb{N} \setminus (T \cup \{\theta\}) \subseteq \mathbb{N} \setminus T \subseteq \mathbb{N} \setminus d\, S$. Therefore, by Lemma \ref{lema2}, $T \cup \{\theta\} \in M_d(S)$ and, consequently, $\theta \leq \theta_S^d(T)$.
\end{proof}

Using Proposition \ref{Prop12}, we can show that there are numerical semigroups $T$ which are leaves of $\mathcal{G}(\Theta_S^d(T))$ (that is, that have no children) for every $d \in \mathbb{N} \setminus \{0\}$ and $S = \frac{T}d$.

\begin{example}
Let $T = \langle 6,9,11 \rangle$. By direct computation, one can check that $\operatorname{PF}(T \setminus \{6\}) = \{6,9,25\}, \operatorname{PF}(T \setminus \{9\}) = \{9,16,25\}$ and $\operatorname{PF}(T \setminus \{9\}) = \{11,14,25\}$. So, for every $d \in \mathbb{N}\setminus \{0\}$ and $S = \frac{T}d$, we have that $T$ is a leaf of $\mathcal{G}(\Theta_{S}^d(T))$ by Proposition \ref{Prop12}. Moreover, if $d = 5$, then $S = \langle 3,4 \rangle$ and $\max \operatorname{M}_d(S)  = \{T\}$; therefore, in this case, $(\Theta_{S}^d)^{-1}(T)=\{T\}$.
\end{example}

Let us see that in most cases the maximum of the proposition $\theta_S^d(T)$ can be determined immediately.

\begin{corollary}\label{Cor13I}
If $T \in \operatorname{M}_d(S)$ and $\operatorname{F}(T)$ is not divisible by $d$, then $\theta_S^d(T) = \operatorname{F}(T)$.
\end{corollary}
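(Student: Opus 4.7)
The plan is to verify that $\operatorname{F}(T)$ sits inside the set whose maximum is computed by Proposition \ref{Prop12}, and then note that it is automatically the largest element since $\operatorname{F}(T)$ is the maximum of $\mathbb{N}\setminus T$.

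First I would check that $\theta_S^d(T)$ is actually defined, i.e.\ that $T\notin\max\operatorname{M}_d(S)$. By Proposition \ref{prop8I}, every element of $\max\operatorname{M}_d(S)$ has Frobenius number $d\operatorname{F}(S)$, which is divisible by $d$. Since by hypothesis $d\nmid\operatorname{F}(T)$, we conclude $T\notin\max\operatorname{M}_d(S)$, so Lemma \ref{lema7} applies and Proposition \ref{Prop12} gives
\[\theta_S^d(T)=\max\{z\in\operatorname{PF}(T)\mid 2z\in T\ \text{and}\ z\not\in d(\mathbb{N}\setminus S)\}.\]

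Next I would verify that $\operatorname{F}(T)$ belongs to this set. It is a standard fact (and immediate from the definition) that $\operatorname{F}(T)\in\operatorname{PF}(T)$: indeed, $\operatorname{F}(T)\notin T$, and for every $x\in T\setminus\{0\}$ we have $\operatorname{F}(T)+x>\operatorname{F}(T)$, hence $\operatorname{F}(T)+x\in T$. Since $S\neq\mathbb{N}$ forces $T\neq\mathbb{N}$ and therefore $\operatorname{F}(T)\geq 1$, we also have $2\operatorname{F}(T)>\operatorname{F}(T)$, so $2\operatorname{F}(T)\in T$. Finally, $d(\mathbb{N}\setminus S)\subseteq d\mathbb{N}$, and the assumption $d\nmid\operatorname{F}(T)$ gives $\operatorname{F}(T)\notin d\mathbb{N}$, hence $\operatorname{F}(T)\notin d(\mathbb{N}\setminus S)$.

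To close the argument, I would observe that the set in the above maximum is contained in $\operatorname{PF}(T)\subseteq\mathbb{N}\setminus T$, and $\operatorname{F}(T)=\max(\mathbb{N}\setminus T)$ by definition. Hence $\operatorname{F}(T)$ is an upper bound for that set and, being an element of it, is its maximum. Therefore $\theta_S^d(T)=\operatorname{F}(T)$. No step is really an obstacle here; the only subtle point is ensuring $\theta_S^d(T)$ is defined, which is precisely what Proposition \ref{prop8I} rules out from the divisibility hypothesis.
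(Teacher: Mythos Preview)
Your proof is correct and follows essentially the same route as the paper's: apply Proposition~\ref{Prop12} and recognize $\operatorname{F}(T)=\max\operatorname{PF}(T)$ with $2\operatorname{F}(T)\in T$. You are in fact more careful than the paper, which does not explicitly justify that $T\notin\max\operatorname{M}_d(S)$ (needed for Proposition~\ref{Prop12} to apply) nor that $\operatorname{F}(T)\notin d(\mathbb{N}\setminus S)$; both of these you deduce cleanly from the non-divisibility hypothesis via Proposition~\ref{prop8I}.
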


\begin{proof}
By Proposition \ref{Prop12}, it suffices to observe that $\operatorname{F}(T) = \max \operatorname{PF}(T)$ and that $2 \operatorname{F}(T) \in T$.
\end{proof}

\begin{corollary}\label{Cor14I}
Let $T \in \operatorname{M}_d(S)$ and $x \in \operatorname{msg}(T)$ be such that $x \not\in d\, S$. If $x > \operatorname{F}(T)$, then $\theta_S^d(T \setminus \{x\}) = x$.
\end{corollary}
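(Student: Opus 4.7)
The plan is to reduce the statement to Corollary \ref{Cor13I} applied to $T' := T \setminus \{x\}$. The payoff of this reduction is that we only have to verify two simple facts: that $T' \in \operatorname{M}_d(S)$ with $\operatorname{F}(T') = x$, and that $d$ does not divide $x$.

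First, I would observe that since $x \in \operatorname{msg}(T)$, the set $T' = T \setminus \{x\}$ is a numerical semigroup (by the standard characterization recalled in the proof of Proposition \ref{Prop10I}); since moreover $x \notin d\, S$, the remark immediately following Proposition \ref{Prop10I} gives $T' \in \operatorname{M}_d(S)$. Next, because $x > \operatorname{F}(T)$, every integer strictly greater than $x$ lies in $T$ (hence in $T'$), while $x \notin T'$; therefore $\operatorname{F}(T') = x$.

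The key step is showing $d \nmid x$. Suppose for contradiction that $x = d\, y$ for some $y \in \mathbb{N}$. Since $x \notin d\, S$, we would have $y \notin S$, i.e.\ $y \in \mathbb{N}\setminus S$, so $x = d\, y \in d\,(\mathbb{N}\setminus S)$. But $T \in \operatorname{M}_d(S)$ gives, by Lemma \ref{lema2}, the inclusion $d\,(\mathbb{N}\setminus S) \subseteq \mathbb{N}\setminus T$, forcing $x \notin T$, which contradicts $x \in \operatorname{msg}(T) \subset T$. Hence $d \nmid x = \operatorname{F}(T')$.

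With these facts in hand, Corollary \ref{Cor13I} applied to $T' \in \operatorname{M}_d(S)$ yields $\theta_S^d(T') = \operatorname{F}(T') = x$, which is exactly the claim. The only substantive content is the divisibility observation; the rest is bookkeeping about the Frobenius number of $T \setminus \{x\}$ and an invocation of the preceding corollary.
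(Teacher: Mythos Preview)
Your proof is correct and follows essentially the same outline as the paper's: establish $\operatorname{F}(T\setminus\{x\}) = x$ and then invoke the characterization of $\theta_S^d$ coming from Proposition~\ref{Prop12}. The only cosmetic differences are that the paper applies Proposition~\ref{Prop12} directly (checking $x\notin d(\mathbb{N}\setminus S)$ via the inequality $x>\operatorname{F}(T)\ge d\operatorname{F}(S)$ from Lemma~\ref{lema2}), whereas you route through Corollary~\ref{Cor13I} and therefore need the slightly stronger fact $d\nmid x$, which you obtain by a membership-based contradiction using $x\in T$ and Lemma~\ref{lema2}; both arguments are valid and of comparable length.
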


\begin{proof}
Since $x > \operatorname{F}(T)$, we have that $\operatorname{F}(T \setminus \{x\}) = x$. Moreover, by Lemma \ref{lema2}, $x > \operatorname{F}(T) \geq d \operatorname{F}(S)$; in particular, $x \not\in d(\mathbb{N} \setminus S)$.  So, since $x = \operatorname{F}(T \setminus \{x\}) = \max \operatorname{PF}(T \setminus \{x\})$, we have that $x = \max\{z \in \operatorname{PF}(T \setminus \{x\}) \mid 2z \in T \setminus \{x\}\ \text{and}\ z \not\in d (\mathbb{N} \setminus S)\}$ and, by Proposition \ref{Prop12}, we conclude that $\theta_S^d(T \setminus \{x\}) = x$.
\end{proof}

The following result characterizes the elements of $\operatorname{M}_d(S)$ with minimal Frobenius number (see Lemma \ref{lema2}).

\begin{lemma}\label{Lema11I}
If $T \in \operatorname{M}_d(S)$, then $\operatorname{F}(T)$ is divisible by $d$ if and only if $\operatorname{F}(T) = d \operatorname{F}(S)$.
\end{lemma}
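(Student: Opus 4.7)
The plan is to prove the two implications separately, with one direction being immediate and the other reducing to a clean two-sided inequality via the definition of the quotient and Lemma \ref{lema2}.

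The easy direction is ($\Leftarrow$): if $\operatorname{F}(T) = d \operatorname{F}(S)$, then $d$ trivially divides $\operatorname{F}(T)$, so there is nothing to do.

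For the nontrivial direction ($\Rightarrow$), suppose $\operatorname{F}(T) = d\, k$ for some $k \in \mathbb{N}$. I want to show $k = \operatorname{F}(S)$, which would force $\operatorname{F}(T) = d\operatorname{F}(S)$. First I would use the definition of the quotient: since $T \in \operatorname{M}_d(S)$ means $S = \frac{T}{d} = \{x \in \mathbb{N} \mid dx \in T\}$, and since $\operatorname{F}(T) = dk \notin T$, the integer $k$ cannot lie in $S$. Therefore $k \leq \operatorname{F}(S)$. For the reverse inequality, Lemma \ref{lema2} gives directly $\operatorname{F}(T) \geq d \operatorname{F}(S)$, so $dk \geq d \operatorname{F}(S)$, and hence $k \geq \operatorname{F}(S)$. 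Combining the two inequalities yields $k = \operatorname{F}(S)$, so $\operatorname{F}(T) = d \operatorname{F}(S)$.

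There is essentially no obstacle here; the whole argument rests on unpacking the definition of $S = T/d$ (which makes $k \notin S$ automatic from $dk \notin T$) and quoting the lower bound already recorded in Lemma \ref{lema2}. The only thing to be a bit careful about is the edge case where $\operatorname{F}(S)$ might be negative or undefined (i.e.\ $S = \mathbb{N}$), but in that situation $T = \mathbb{N}$ as well and both conditions hold vacuously, so the statement is not affected.
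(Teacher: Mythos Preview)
Your proof is correct and considerably more direct than the paper's. The paper argues by contradiction and leans on the tree machinery built up in Section~\ref{Sect3}: assuming $\operatorname{F}(T)\neq d\operatorname{F}(S)$, it invokes Proposition~\ref{prop8I} to get $T\notin\max\operatorname{M}_d(S)$, then uses Lemma~\ref{lema7} and Proposition~\ref{Prop10I} to pass to the parent $T'=T\cup\{\theta_S^d(T)\}$, iterating until some minimal generator $x>\operatorname{F}(T')$ is removed and a contradiction with $T'/d=S$ appears. Your argument bypasses all of this: writing $\operatorname{F}(T)=dk$, the fact that $dk\notin T$ immediately gives $k\notin S=T/d$, hence $k\le\operatorname{F}(S)$, while Lemma~\ref{lema2} already supplies $k\ge\operatorname{F}(S)$. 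This two-sided squeeze uses nothing beyond the definition of the quotient and Lemma~\ref{lema2}, so the lemma could in fact be stated and proved right after Lemma~\ref{lema2} rather than inside Section~\ref{Sect3}. (Your remark about the edge case $S=\mathbb{N}$ is moot here, since the section's standing hypothesis is $S\neq\mathbb{N}$.)
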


\begin{proof}
If $\operatorname{F}(T) = d \operatorname{F}(S)$, then it is obvious that $\operatorname{F}(T)$ is divisible by $d$. Conversely, assume the opposite: $\operatorname{F}(T) \neq d \operatorname{F}(S)$; in particular, by Proposition \ref{prop8I}, $T \not\in \max \operatorname{M}_d(S) $. So, by Lemma \ref{lema7}, $T' = T \cup \{\theta_S^d(T)\}$ is a well-defined element of $\operatorname{M}_d(S)$. Therefore, by Proposition \ref{Prop10I}, there exists $x \in \operatorname{msg}(T')$ with $x \not\in d\, S, \theta_S^d(T' \setminus \{x\}) = x$ and $T' \setminus \{x\} = T$. If $x < \operatorname{F}(T')$, then $\operatorname{F}(T') = \operatorname{F}(T)  \neq d \operatorname{F}(S)$. So, we may replace $T$ by $T'$ and repeat the above arguments. Thus, without loss of generality, we may suppose that $x > \operatorname{F}(T')$. In this case, $x = F(T' \setminus \{x\}) = F(T)$ is divisible by $d$ by hypothesis, that is, $x = d y$ for some $y \in \mathbb{N} \setminus \{0\}$. Now, since $x \not\in d\, S$, we have that $y \not\in S$. Therefore, $y \in \frac{T'}d \neq S$, a contradiction. 
\end{proof}

Recall that, even though  $\operatorname{F}(T) = d \operatorname{F}(S)$ for every $T \in \max \operatorname{M}_d(S) $, we already know that the condition $\operatorname {F}(T) = d \operatorname{F}(S),\ T \in \operatorname{M}_d(S)$, it does not imply $T \in \max(\operatorname{M }_d (S))$ (see Example \ref{Ex9I}).

The following result give an easier characterization of the set of children of $T \in \operatorname{M}_d(S)$ in $\mathcal{G}(\Theta_S^d(T))$, when $\operatorname{F}(T)$ is not minimal.

\begin{proposition}
Let $T \in \operatorname{M}_d(S)$ and $R = \Theta_S^d(T)$. If $\operatorname{F}(T) \neq d \operatorname{F}(S)$, then the set of children of $T$ in $\mathcal{G}(R)$ is equal to \[\{T \setminus \{x\} \subset \mathbb{N} \mid x \in \operatorname{msg}(T), x \not\in d\, S\ \text{and}\ x > \operatorname{F}(T)\}.\] In particular, if $\operatorname{F}(T) \neq d \operatorname{F}(S)$ and $x < \operatorname{F}(T)$ for every $x \in \operatorname{msg}(T)$, then $T$ is a leaf of $\mathcal{G}(R)$.
\end{proposition}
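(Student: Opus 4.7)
The plan is to combine Proposition \ref{Prop10I} (which describes children in terms of the condition $\theta_S^d(T\setminus\{x\})=x$) with Lemma \ref{Lema11I} (which links divisibility of $\operatorname{F}(T)$ by $d$ to the equality $\operatorname{F}(T)=d\operatorname{F}(S)$) in order to force the maximum defining $\theta_S^d(T\setminus\{x\})$ to coincide with the Frobenius number whenever $x<\operatorname{F}(T)$.

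First, by Proposition \ref{Prop10I}, the set of children of $T$ equals
\[\{T\setminus\{x\} \mid x\in\operatorname{msg}(T),\ x\notin d\,S,\ \theta_S^d(T\setminus\{x\})=x\},\]
so it suffices to show that, under the hypothesis $\operatorname{F}(T)\neq d\operatorname{F}(S)$, the condition $\theta_S^d(T\setminus\{x\})=x$ is equivalent to $x>\operatorname{F}(T)$ for each $x\in\operatorname{msg}(T)$ with $x\notin d\,S$.

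For the easy direction, Corollary \ref{Cor14I} gives $\theta_S^d(T\setminus\{x\})=x$ whenever $x>\operatorname{F}(T)$, so I just quote it. For the converse, I argue by contradiction: assume $\theta_S^d(T\setminus\{x\})=x$ and $x\le\operatorname{F}(T)$. Since $x\in\operatorname{msg}(T)\subseteq T$ and $\operatorname{F}(T)\notin T$, one actually has $x<\operatorname{F}(T)$, whence
\[\operatorname{F}(T\setminus\{x\})=\max\bigl((\mathbb{N}\setminus T)\cup\{x\}\bigr)=\operatorname{F}(T).\]
The note following Proposition \ref{Prop10I} shows that $T\setminus\{x\}\in\operatorname{M}_d(S)$; moreover, since $\operatorname{F}(T\setminus\{x\})=\operatorname{F}(T)\neq d\operatorname{F}(S)$, Proposition \ref{prop8I} forces $T\setminus\{x\}\notin\max\operatorname{M}_d(S)$, so $\theta_S^d(T\setminus\{x\})$ is well-defined. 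Now I invoke Lemma \ref{Lema11I} on $T$: because $\operatorname{F}(T)\neq d\operatorname{F}(S)$, the integer $\operatorname{F}(T)$ is not divisible by $d$. Hence $\operatorname{F}(T\setminus\{x\})$ is not divisible by $d$ either, and Corollary \ref{Cor13I} applied to $T\setminus\{x\}$ yields
\[\theta_S^d(T\setminus\{x\})=\operatorname{F}(T\setminus\{x\})=\operatorname{F}(T)>x,\]
contradicting $\theta_S^d(T\setminus\{x\})=x$. This establishes the claimed equality of sets.

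The "in particular" part is then immediate: if every $x\in\operatorname{msg}(T)$ satisfies $x<\operatorname{F}(T)$, the set displayed above is empty, so $T$ has no children and is a leaf of $\mathcal{G}(R)$.

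I expect the only delicate point to be the careful bookkeeping in the second direction, namely verifying that Corollary \ref{Cor13I} is applicable to $T\setminus\{x\}$ (which requires both $T\setminus\{x\}\in\operatorname{M}_d(S)$ and $T\setminus\{x\}\notin\max\operatorname{M}_d(S)$, guaranteed respectively by the remark after Proposition \ref{Prop10I} and by Proposition \ref{prop8I}); everything else is a direct chain of prior results.
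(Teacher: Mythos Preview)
Your proof is correct and follows essentially the same route as the paper's: both reduce via Proposition~\ref{Prop10I} to the equivalence $\theta_S^d(T\setminus\{x\})=x \iff x>\operatorname{F}(T)$, handle $x>\operatorname{F}(T)$ by Corollary~\ref{Cor14I}, and handle $x<\operatorname{F}(T)$ by combining Lemma~\ref{Lema11I} with Corollary~\ref{Cor13I} to force $\theta_S^d(T\setminus\{x\})=\operatorname{F}(T)\neq x$. Your version is slightly more explicit in checking the hypotheses needed to apply Corollary~\ref{Cor13I} to $T\setminus\{x\}$, but the argument is otherwise identical.
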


\begin{proof}
Let $x \in \operatorname{msg}(T) \setminus d\, S$. On the one hand, if $x < \operatorname{F}(T),$ then $\operatorname{F}(T \setminus \{x\}) = \operatorname{F}(T) \neq d \operatorname{F}(S)$. Thus, by Lemma \ref{Lema11I}, $\operatorname{F}(T \setminus \{x\})$ is not divisible by $d$ and, by Corollary \ref{Cor13I}, $\theta_S^d(T \setminus \{x\}) = \operatorname{F}(T \setminus \{x\}) = \operatorname{F}(T) > x$; in particular, $\theta_S^d(T \setminus \{x\}) \neq x$.  On the other hand, if $x > \operatorname{F}(T)$, then, by Corollary \ref{Cor14I}, $\theta_S^d(T \setminus \{x\}) = x$. Therefore,  $\theta_S^d(T \setminus \{x\}) = x$ if and only if $x > \operatorname{F}(T)$. Now, by Proposition \ref{Prop10I}, we are done.
\end{proof}

The condition $\operatorname{F}(T) \neq d \operatorname{F}(S)$ is necessary in the proposition above.

\begin{example}
Let $S = \langle 2,3 \rangle$ and $d = 11$. If $T = \langle 5,7,8,9 \rangle$, then $\frac{T}{11} = S$. In this case, $\operatorname{F}(T) = 11 = 11 \cdot \operatorname{F}(S)$ and both $T \setminus \{8\} = \langle 5,7,9,13 \rangle$ and $T \setminus \{9\} = \langle 5,7,8 \rangle$ are children of $T$ in $\mathcal{G}(\Theta_S^d(T))$. So, $T$ is not a leaf of $\mathcal{G}(\Theta_S^d(T))$.
\end{example}

The following example shows that leaves can actually appear in the case $\operatorname{F}(T) = d \operatorname{F}(S)$.

\begin{example}
Let $S = \langle 3,5,7 \rangle$ and $d = 3$. The numerical semigroup $T = \langle 5,8,9 \rangle$ belongs to $\operatorname{M}_d(S)$ and has Frobenius number $12 = 3 \operatorname{F}(S)$. By Proposition \ref{Prop12}, we have that $T$ is a leaf of $\mathcal{G}(\Theta_S^d(T))$, because $\operatorname{PF}(T \setminus \{5\}) = \{5,6,7,11,12\}, \operatorname{PF}(T \setminus \{8\}) = \{4,8,11,12\}$ and $\operatorname{PF}(T \setminus \{9\}) = \{9,11,12\}$.
\end{example}

\section{$\operatorname{M}_d(S)-$monoides}\label{Sect4}

Hereafter, $S \neq \mathbb{N}$ and $d$ denote a numerical semigroup and a positive integer, respectively.

We begin by observing that the obvious equality
\[
\frac{T \cap T'}d = \frac{T}d \cap \frac{T'}d
\]
implies that $\operatorname{M}_d(S)$ is closed under finite intersections. This does not occur for intersection of infinitely many elements of $\operatorname{M}_d(S)$, since the intersection of infinite numerical semigroups is not a numerical semigroup because the condition of having finite complement with respect to $\mathbb{N}$ fails; for example, $T(n) = d\, S \cup \{n+i \mid i \in \mathbb{N}\} \in \operatorname{M}_d(S)$ for all integer $n > d \operatorname{F}(S)$, however $\bigcap_{n > d \operatorname{F}(S)} T(n) = d\, S \not\in \operatorname{M}_d(S)$.

Despite of above, the intersection of elements of $\mathscr{S}$ is always a submonoid of $(\mathbb{N},+)$ and, consequently, the same happens to $\operatorname{M}_d(S)$. We say that a submonoid of $(\mathbb{N},+)$ that can be written as an intersection of elements of $\operatorname{M}_d(S)$ is a \emph{$\operatorname{M}_d(S)-$monoid}.

\begin{definition}
A \emph{$\operatorname{M}_d(S)-$set} is a subset of $\mathbb{N}$ which is contained in some $T \in \operatorname{M}_d(S)$.
\end{definition}

Observe that every $\operatorname{M}_d(S)-$set $X$ is, in particular, a subset of $S$.

\begin{proposition}\label{Lema27}
If $X \subseteq \mathbb{N}$, then the following conditions are equivalent:
\begin{enumerate}[(1)]
\item $X$ is a $\operatorname{M}_d(S)-$set.
\item $\langle X \rangle \cap d (\mathbb{N} \setminus S) = \varnothing$.
\item $(\langle X \rangle + d\, S) \cap d (\mathbb{N} \setminus S) = \varnothing$.
\item $\langle X \rangle + d\, S$ is a $\operatorname{M}_d(S)-$monoid.
\end{enumerate}
\end{proposition}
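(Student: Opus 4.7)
The plan is to prove the equivalence by establishing the cycle $(1) \Rightarrow (2) \Rightarrow (3) \Rightarrow (4) \Rightarrow (1)$. Lemma \ref{lema2} does most of the heavy lifting here, since it rephrases membership in $\operatorname{M}_d(S)$ as the two symmetric containment conditions $dS \subseteq T$ and $T \cap d(\mathbb{N} \setminus S) = \varnothing$.

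The outer implications $(1) \Rightarrow (2)$ and $(4) \Rightarrow (1)$ are essentially bookkeeping. If $X \subseteq T \in \operatorname{M}_d(S)$, then $\langle X \rangle \subseteq T$ since $T$ is a submonoid, and Lemma \ref{lema2} immediately gives $\langle X \rangle \cap d(\mathbb{N} \setminus S) = \varnothing$. Conversely, if $\langle X \rangle + dS$ is an intersection $\bigcap_i T_i$ of elements of $\operatorname{M}_d(S)$, then picking any one $T_i$ shows $X \subseteq \langle X \rangle + dS \subseteq T_i$, so $X$ is an $\operatorname{M}_d(S)$-set.

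For $(2) \Rightarrow (3)$, I argue by contradiction. A putative witness $u + ds = dn$ with $u \in \langle X \rangle$, $s \in S$, $n \in \mathbb{N} \setminus S$ forces $u = d(n-s)$; since $u \geq 0$ one has $n \geq s$, and the key observation is that $n - s \in \mathbb{N} \setminus S$ (otherwise $n = (n-s) + s \in S$, as $S$ is closed under addition). Hence $u \in \langle X \rangle \cap d(\mathbb{N} \setminus S)$, contradicting (2).

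The main technical step is $(3) \Rightarrow (4)$, where I must exhibit $\langle X \rangle + dS$ as an intersection of numerical semigroups in $\operatorname{M}_d(S)$. The natural construction is, for each gap $n \in \mathbb{N} \setminus (\langle X \rangle + dS)$, the set
\[T_n := (\langle X \rangle + dS) \cup \{m \in \mathbb{N} \mid m > \max(n, d\operatorname{F}(S))\}.\]
I expect the main obstacle to be verifying coherently that every $T_n$ really sits in $\operatorname{M}_d(S)$: closure under addition (mixed sums exceed the threshold), finiteness of $\mathbb{N} \setminus T_n$ (it lies in an initial segment), $dS \subseteq T_n$ (take $0 \in \langle X \rangle$), and $T_n \cap d(\mathbb{N} \setminus S) = \varnothing$, where (3) handles the first piece of the union and the tail is disjoint from $d(\mathbb{N} \setminus S) \subseteq \{1, \ldots, d\operatorname{F}(S)\}$. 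The fact that the index set is nonempty uses $S \neq \mathbb{N}$ together with (3), which forces $d(\mathbb{N} \setminus S) \subseteq \mathbb{N} \setminus (\langle X \rangle + dS)$. Finally, to check $\bigcap_n T_n = \langle X \rangle + dS$, one inclusion is clear; for the reverse, given $m \notin \langle X \rangle + dS$, the choice $n = m$ yields $m \not> \max(m, d\operatorname{F}(S))$, so $m \notin T_m$, completing the argument.
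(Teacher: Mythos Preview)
Your proof is correct and follows essentially the same cycle $(1)\Rightarrow(2)\Rightarrow(3)\Rightarrow(4)\Rightarrow(1)$ as the paper, with the same contradiction argument for $(2)\Rightarrow(3)$ and the same ``monoid plus tail'' construction for $(3)\Rightarrow(4)$. The only cosmetic difference is that the paper indexes its family $T(n)=(\langle X\rangle+dS)\cup\{n,n+1,\ldots\}$ over all integers $n>d\operatorname{F}(S)$, whereas you index over the gaps of $\langle X\rangle+dS$ and build in the threshold via $\max(n,d\operatorname{F}(S))$; both give the same intersection, and your extra justification that $n-s\in\mathbb{N}\setminus S$ in the $(2)\Rightarrow(3)$ step is a welcome clarification that the paper leaves implicit.
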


\begin{proof}
$(1) \Rightarrow (2)$. If $X$ is $\operatorname{M}_d(S)-$set, then exists $T \in \operatorname{M}_d(S)$ such that $X \subseteq T$. Now, since $T \cap d (\mathbb{N} \setminus S) = \varnothing$ by Lemma \ref{lema2}, we are done. 

$(2) \Rightarrow (3)$. Suppose on contrary that $(\langle X \rangle + d\, S) \cap d (\mathbb{N} \setminus S) \neq  \varnothing$, that is, there exists $h \in \mathbb{N} \setminus S, a \in \langle X \rangle$ and $s \in S$ such that $a + d s = d h$. In this case, $a = d (h -s)$ with $h - s \in \mathbb{N} \setminus S$ and, consequently, $\langle X \rangle \cap d (\mathbb{N} \setminus S) \neq \varnothing$.

$(3) \Rightarrow (4)$. By Lemma \ref{lema2}, we have that \[T(n) := (\langle X \rangle + d\, S) \cup \{n + i \mid i \in \mathbb{N}\} \in \operatorname{M}_d(S),\] for every integer $n > d \operatorname{F}(S)$. Therefore, $\langle X \rangle + d\, S = \bigcap_{n > d \operatorname{F}(S)} T(n)$ is, by definition, a $\operatorname{M}_d(S)-$monoid.

$(4) \Rightarrow (1)$. Since $\langle X \rangle + d\, S$ is an intersection of element of $\operatorname{M}_d(S)$, there exists $T \in \operatorname{M}_d(S)$ such that $X \subseteq \langle X \rangle + d\, S \subseteq T$; thus, $X$ is a $\operatorname{M}_d(S)-$set.
\end{proof}

\begin{corollary}
If $X$ is a $\operatorname{M}_d(S)-$set, then $\langle X \rangle + d\, S$ is the smallest $\operatorname{M}_d(S)-$monoid containing $X$.  
\end{corollary}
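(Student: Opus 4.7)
The plan is to leverage the equivalence $(1) \Leftrightarrow (4)$ of the preceding proposition (Lema27) to first confirm that $\langle X \rangle + d\, S$ is indeed a $\operatorname{M}_d(S)$-monoid containing $X$, and then to show minimality by a direct intersection argument.

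First I would observe that since $X$ is a $\operatorname{M}_d(S)$-set, Proposition \ref{Lema27} immediately gives that $M_0 := \langle X \rangle + d\, S$ is a $\operatorname{M}_d(S)$-monoid, and obviously $X \subseteq M_0$ (take $s = 0 \in S$, so that each $x \in X$ appears as $x + d\cdot 0$).

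For minimality, let $M$ be any $\operatorname{M}_d(S)$-monoid with $X \subseteq M$. By definition, there is a family $\{T_i\}_{i \in I} \subseteq \operatorname{M}_d(S)$ with $M = \bigcap_{i \in I} T_i$. For each index $i$, one has $X \subseteq M \subseteq T_i$, and since $T_i$ is a submonoid of $(\mathbb{N},+)$, closure under addition yields $\langle X \rangle \subseteq T_i$. Moreover, by Lemma \ref{lema2} the condition $T_i \in \operatorname{M}_d(S)$ forces $\mathbb{N} \setminus T_i \subseteq \mathbb{N} \setminus d\, S$, i.e., $d\, S \subseteq T_i$. Using again that $T_i$ is closed under addition, $\langle X \rangle + d\, S \subseteq T_i$. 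Intersecting over $i$ gives $M_0 \subseteq M$, as desired.

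No step is genuinely difficult here; the only point that might trip the reader is remembering that the right-hand inclusion in Lemma \ref{lema2} is equivalent to $d\, S \subseteq T$, which is exactly what makes $d\, S$ sit inside every member of $\operatorname{M}_d(S)$ (and hence inside every $\operatorname{M}_d(S)$-monoid). Once that is noted, the result is essentially forced by the semigroup axioms.
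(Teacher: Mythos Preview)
Your proof is correct and follows essentially the same approach as the paper: both invoke Proposition~\ref{Lema27} to see that $\langle X\rangle + dS$ is an $\operatorname{M}_d(S)$-monoid containing $X$, and both establish minimality by using Lemma~\ref{lema2} to get $dS\subseteq T$ and closure under addition to get $\langle X\rangle + dS\subseteq T$ for every $T\in\operatorname{M}_d(S)$ containing $X$. The only cosmetic difference is that you pass explicitly through the intersection $M=\bigcap_i T_i$, whereas the paper observes directly that containment in every such $T$ already yields the conclusion.
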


\begin{proof}
If $T \in \operatorname{M}_d(S)$ and $X \subseteq T$, then $\langle X \rangle \subseteq T$; moreover, by Lemma \ref{lema2}, we already know that $d\, S \subseteq T$. Therefore, $\langle X \rangle + d\, S \subseteq T$ for every $T \in \operatorname{M}_d(S)$ containing $X$. Now, since,  by Proposition \ref{Lema27},  $\langle X \rangle + d\, S$ is a $\operatorname{M}_d(S)-$monoid and it clearly contains $X$, we conclude that $\langle X \rangle + d\, S$ the smallest $\operatorname{M}_d(S)-$monoid containing $X$.
\end{proof}

\begin{proposition}
If $M$ is a $\operatorname{M}_d(S)-$monoid, then there exists a finite $\operatorname{M}_d(S)-$set $X$ such that $M = \langle X \rangle + d\, S$. \end{proposition}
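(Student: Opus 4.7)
The plan is to take $X := \operatorname{msg}(M)$, the minimal system of generators of $M$ regarded as a submonoid of $(\mathbb{N},+)$. By \cite[Corollary 2.8]{libro} recalled in the introduction, $X$ is automatically finite, so the only work is to check that $X$ is a $\operatorname{M}_d(S)$-set and that $M = \langle X \rangle + d\, S$.

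To see that $X$ is a $\operatorname{M}_d(S)$-set, I would unpack the definition of $\operatorname{M}_d(S)$-monoid: $M = \bigcap_{i \in I} T_i$ for a family $\{T_i\}_{i \in I} \subseteq \operatorname{M}_d(S)$, which is non-empty because $\mathbb{N} \notin \operatorname{M}_d(S)$ under the standing assumption $S \neq \mathbb{N}$. Picking any $i_0 \in I$, the inclusions $X \subseteq M \subseteq T_{i_0}$ together with $T_{i_0} \in \operatorname{M}_d(S)$ immediately yield that $X$ is a $\operatorname{M}_d(S)$-set. At the same time Lemma \ref{lema2} gives $d\, S \subseteq T_i$ for every $i$, hence $d\, S \subseteq M$.

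The equality $M = \langle X \rangle + d\, S$ then drops out from two elementary inclusions. First, $\langle X \rangle + d\, S \subseteq M$, since both $X$ and $d\, S$ are contained in $M$ and $M$ is closed under addition. Conversely, because $X$ generates $M$ we have $M = \langle X \rangle \subseteq \langle X \rangle + d\, S$, using $0 \in d\, S$.

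There is no serious obstacle here: the whole proposition rests on the fact that every submonoid of $(\mathbb{N},+)$ is finitely generated, which is already part of the paper's toolkit. The only subtlety worth flagging is that the choice $X = \operatorname{msg}(M)$ need not be the smallest such set; the minimal $\operatorname{M}_d(S)$-generating set taken up in the subsequent proposition can be a proper subset, since any element of $\operatorname{msg}(M)$ that happens to lie in $d\, S$ is redundant in the expression $\langle X \rangle + d\, S$.
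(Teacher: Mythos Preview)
Your argument is correct and follows essentially the same route as the paper: take a finite generating set of $M$, observe it sits inside some $T\in\operatorname{M}_d(S)$ (hence is a $\operatorname{M}_d(S)$-set), note $d\,S\subseteq M$ via Lemma~\ref{lema2}, and deduce $M=\langle X\rangle+d\,S$ by the obvious double inclusion. The only cosmetic difference is that the paper, after taking a generating set $Y$, passes to the subset of $Y$ lying outside $d\,S$, whereas you keep $X=\operatorname{msg}(M)$ intact and simply remark that it need not be minimal; both are fine for what the proposition claims.
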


\begin{proof}
On the one hand, since $M$ is a submonoid of $(\mathbb{N},+)$, it is finitely generated (see, e.g. \cite[Lemma 2.3]{libro}); thus, there exists a finite subset $Y$ of $\mathbb{N}$ such that $M = \langle Y \rangle$. On the other hand, since $M = \bigcap_{i \in I} T_i$ for some $\{T_i \mid i \in I\} \subseteq \operatorname{M}_d(S),$ by Lemma \ref{lema2}, we have that $d(\mathbb{N} \setminus S) \subset \mathbb{N} \setminus T_i$ for every $i \in I$. Therefore, $Y \subseteq \langle Y \rangle = M = \bigcap_{i \in I} T_i$, implies $Y \cap d(\mathbb{N} \setminus S) = \varnothing$. Then, by Proposition \ref{Lema27}, $X = \{y \in Y \mid d y \not\in S\}$ is a finite $\operatorname{M}_d(S)-$set such that $M = \langle X \rangle + d\, S$.
\end{proof}

Observe that, given a $\operatorname{M}_d(S)-$set $X$, we have that $\langle X \rangle + d\, S$ is $d-$multiple of $S$ if and only if $\gcd(X \cup \{d\}) = 1$ (see \cite[Lemma 2.1]{libro}). Therefore, taking into account the previous results, the following characterization of $\operatorname{M}_d(S)$ easily follows.

\begin{corollary}\label{Lema32}
The necessary and sufficient condition for $T \in \operatorname{M}_d(S)$ if that there exists a finite subset $X$ of $S$ such that $\langle X \rangle \cap d (\mathbb{N} \setminus S) = \varnothing,\ \gcd(X \cup \{d\})=1$ and $T = \langle X \rangle + d\, S$. 
\end{corollary}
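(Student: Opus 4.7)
The plan is to combine three ingredients that have just been proved: Proposition \ref{Lema27} (which gives four equivalent characterizations of an $\operatorname{M}_d(S)$-set, in particular that $X$ being an $\operatorname{M}_d(S)$-set is equivalent to $\langle X \rangle \cap d(\mathbb{N}\setminus S) = \varnothing$, and also equivalent to $\langle X \rangle + d\,S$ being a $\operatorname{M}_d(S)$-monoid), the preceding proposition (every $\operatorname{M}_d(S)$-monoid can be written as $\langle X \rangle + d\,S$ for a \emph{finite} $\operatorname{M}_d(S)$-set $X$), and the observation immediately before the corollary that, for an $\operatorname{M}_d(S)$-set $X$, the submonoid $\langle X \rangle + d\,S$ is a $d$-multiple of $S$ exactly when $\gcd(X \cup \{d\}) = 1$.

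For necessity, I would start from $T \in \operatorname{M}_d(S)$. Then $T$ is trivially a $\operatorname{M}_d(S)$-monoid (the intersection of the one-element family $\{T\}$), so by the previous proposition there is a finite $\operatorname{M}_d(S)$-set $X$ with $T = \langle X \rangle + d\,S$. By the remark following the definition of $\operatorname{M}_d(S)$-set, every $\operatorname{M}_d(S)$-set lies in $S$, so $X \subseteq S$; by Proposition \ref{Lema27} the condition $\langle X \rangle \cap d(\mathbb{N}\setminus S) = \varnothing$ holds; and because $T$ is a numerical semigroup (hence a $d$-multiple of $S$), the cited observation forces $\gcd(X \cup \{d\}) = 1$.

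For sufficiency, I would start from a finite $X \subseteq S$ satisfying the three conditions and set $T = \langle X \rangle + d\,S$. Since $\langle X \rangle \cap d(\mathbb{N}\setminus S) = \varnothing$, Proposition \ref{Lema27} tells us that $X$ is an $\operatorname{M}_d(S)$-set and $T$ is an $\operatorname{M}_d(S)$-monoid. The hypothesis $\gcd(X \cup \{d\}) = 1$, together with the observation just mentioned, promotes $T$ from a mere $\operatorname{M}_d(S)$-monoid to an actual $d$-multiple of $S$, i.e.\ $T \in \operatorname{M}_d(S)$.

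Since every claim used has already been proved, no step requires real work and there is no serious obstacle; the proof is purely a matter of chaining the equivalences in Proposition \ref{Lema27} with the two additional facts. The only thing worth double-checking is that the observation is applied in both directions: once to extract $\gcd(X \cup \{d\}) = 1$ from $T$ being a numerical semigroup, and once to deduce that $T$ is a numerical semigroup from $\gcd(X \cup \{d\}) = 1$.
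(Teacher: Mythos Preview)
Your proof is correct and follows exactly the route the paper intends: the paper does not write out an explicit proof for this corollary, merely stating that it ``easily follows'' from the preceding results, and you have correctly chained together Proposition~\ref{Lema27}, the proposition that every $\operatorname{M}_d(S)$-monoid has the form $\langle X\rangle + d\,S$ for a finite $\operatorname{M}_d(S)$-set $X$, the remark that $\operatorname{M}_d(S)$-sets are subsets of $S$, and the observation on $\gcd(X\cup\{d\})=1$ in both directions.
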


Let $M$ be a $\operatorname{M}_d(S)-$monoid. If $X$ is a $\operatorname{M}_d(S)-$set such that $M = \langle X \rangle + d\, S$ and there is not proper subset of $X$ with that property, we say that $X$ a is \emph{minimal $\operatorname{M}_d(S)-$system of generators of $M$}.

\begin{theorem}\label{Th35}
If $M$ is a $\operatorname{M}_d(S)-$monoid, then $\operatorname{msg}(M) \cap (\mathbb{N} \setminus d \operatorname{msg}(S))$ is the (unique) minimal $\operatorname{M}_d(S)-$system of generators of $M$.
\end{theorem}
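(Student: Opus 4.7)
The plan is to verify three things about $X := \operatorname{msg}(M) \cap (\mathbb{N} \setminus d \operatorname{msg}(S))$: that $M = \langle X \rangle + d\, S$, that $X$ is an $\operatorname{M}_d(S)$-set, and that any $\operatorname{M}_d(S)$-set $Y$ with $M = \langle Y \rangle + d\, S$ necessarily contains $X$. The third point delivers both minimality and uniqueness in one stroke.

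For the generating claim, partition $\operatorname{msg}(M) = X \sqcup (\operatorname{msg}(M) \cap d \operatorname{msg}(S))$. Every $m \in M$ can be expanded as a non-negative integer combination of elements of $\operatorname{msg}(M)$; grouping the terms whose generator lies in $d \operatorname{msg}(S) \subseteq d\, S$ places $m$ in $\langle X \rangle + d\, S$. The reverse inclusion is immediate: $X \subseteq M$ by definition and $d\, S \subseteq M$ because $M$ is an intersection of elements of $\operatorname{M}_d(S)$, each containing $d\, S$ by Lemma \ref{lema2}. The same observation yields $\langle X \rangle \cap d(\mathbb{N} \setminus S) \subseteq M \cap d(\mathbb{N} \setminus S) = \varnothing$, so by Proposition \ref{Lema27} the set $X$ is an $\operatorname{M}_d(S)$-set.

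The crux is the containment $X \subseteq Y$ for an arbitrary $\operatorname{M}_d(S)$-set $Y$ with $M = \langle Y \rangle + d\, S$. Given $x \in X$, write $x = a + d s$ with $a \in \langle Y \rangle$ and $s \in S$; note $x \neq 0$ since $x \in \operatorname{msg}(M)$. If both $a$ and $d s$ are nonzero, then $x$ would split as a sum of two nonzero elements of $M$, contradicting $x \in \operatorname{msg}(M)$. If $a = 0$, then $x = d s$; expanding $s = \sum u_j s_j$ with $s_j \in \operatorname{msg}(S)$, the case $\sum u_j \geq 2$ reproduces the two-summand contradiction, while the case $\sum u_j = 1$ forces $x \in d \operatorname{msg}(S)$, contradicting $x \in X$. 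Finally, if $s = 0$ then $x = a \in \langle Y \rangle$, and writing $x = \sum v_i y_i$ with $y_i \in Y$ again reduces via the two-summand argument to a single $v_i = 1$, whence $x = y_i \in Y$. Hence $X \subseteq Y$. Applied to any minimal $\operatorname{M}_d(S)$-system of generators this forces equality, establishing both the minimality of $X$ and its uniqueness.

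The main obstacle is the case analysis in the last step: one has to separate $a = 0$ and $s = 0$ carefully to pin down membership of $x$ either in $d \operatorname{msg}(S)$ or in $Y$ respectively, and in each case exploit the irreducibility of $x$ in $\operatorname{msg}(M)$ to rule out decompositions of length at least two. The rest is essentially a translation of the classical fact that every submonoid of $\mathbb{N}$ has a unique minimal system of generators.
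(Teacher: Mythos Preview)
Your proof is correct and follows the same overall architecture as the paper's: establish $M=\langle X\rangle+d\,S$ via the partition of $\operatorname{msg}(M)$, verify that $X$ is an $\operatorname{M}_d(S)$-set, and then show $X\subseteq Y$ for every $\operatorname{M}_d(S)$-set $Y$ with $M=\langle Y\rangle+d\,S$. The first two steps are essentially identical to the paper's.

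The only notable difference is in the proof of $X\subseteq Y$. You argue element by element through a three-way case split on a decomposition $x=a+ds$. The paper bypasses this entirely with a single observation: since $Y\cup d\operatorname{msg}(S)$ generates $M$ as a monoid, the minimal generating set satisfies $\operatorname{msg}(M)\subseteq Y\cup d\operatorname{msg}(S)$, and intersecting both sides with $\mathbb{N}\setminus d\operatorname{msg}(S)$ immediately yields $X\subseteq Y$. Your case analysis is in effect an inline reproof of the standard fact that $\operatorname{msg}(M)$ sits inside any generating set of $M$; it is correct but longer than necessary. The paper's route is cleaner, while yours has the mild advantage of being self-contained and not invoking any background statement about minimal generating sets.
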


\begin{proof}
Set $X = \operatorname{msg}(M) \cap (\mathbb{N} \setminus d \operatorname{msg}(S))$. Clearly, by Proposition \ref{Lema27}, $X$ is a $\operatorname{M}_d(S)-$set because $\langle X \rangle \cap d(\mathbb{N} \setminus S) \subseteq M \cap d(\mathbb{N} \setminus S) = \varnothing.$
Moreover, since \[\operatorname{msg}(M) = X \cup (\operatorname{msg}(M) \cap d \operatorname{msg}(S)),\] we have that $M = \langle X \rangle + \langle \operatorname{msg}(M) \cap d \operatorname{msg}(S) \rangle$. Now, since $d\, S \subseteq M$ and $ \operatorname{msg}(M) \cap d \operatorname{msg}(S)  \subseteq d\, S$, we conclude that $M = \langle X \rangle + d\, S$.

On other hand, if $Y$ is a minimal $\operatorname{M}_d(S)-$system of generators of $M$, then $M = \langle Y \rangle + d\, S$. In particular, $M$ is generated as submonoid of $\mathbb{N}$ by $Y \cup d \operatorname{msg}(S)$ and, consequently, $\operatorname{msg}(M) \subseteq Y \cup d \operatorname{msg}(S)$. Thus, \[X = \operatorname{msg}(M) \cap (\mathbb{N} \setminus d \operatorname{msg}(S)) \subseteq (Y \cup d \operatorname{msg}(S)) \cap (\mathbb{N} \setminus d \operatorname{msg}(S)) \subseteq Y\] and we are done.
\end{proof}

\begin{definition}
If $M$ is a $\operatorname{M}_d(S)-$monoide, its \emph{$\operatorname{M}_d(S)-$embedding dimension} is defined as the cardinality of its minimal $\operatorname{M}_d(S)-$system of generators.
\end{definition}

We have that the $\operatorname{M}_d(S)-$embedding dimension is less than or equal to the (usual) embedding dimension for obvious reasons. On other hand, a $\operatorname{M}_d(S)-$monoid $M$ has $\operatorname{M}_d(S)-$embedding dimension zero if and only if $M = d\, S$. So the smallest and non-trivial case is $\operatorname{M}_d(S)-$embedding dimension one.

\begin{example}
Let $S = \langle 5,7,9 \rangle$. By Proposition \ref{Lema27}, it is easy to see that the set $X=\{9,10\}$ is a $\operatorname{M}_2(S)-$set. In this case, $\{9\}$ is the minimal $\operatorname{M}_2(S)-$system of generators of the  $\operatorname{M}_2(S)-$monoid $T = \langle X \rangle + d\, S$ which is actually the numerical semigroup $\langle 9, 10, 14 \rangle$. So, $T$ is $2-$multiple of $S = \langle 5,7,9 \rangle$ of $\operatorname{M}_2(S)-$embedding dimension one.
\end{example}

\section{The $d-$multiples of $S$ with $\operatorname{M}_d(S)-$embedding dimension one}\label{Sect5}

Similar to the previous sections, in the following $S$ denotes a numerical semigroup and $d$ a positive integer.

The following result is an immediate consequence of Corollary \ref{Lema32} and Theorem \ref{Th35}.

\begin{proposition}\label{Th38}
A subset $T$ of $\mathbb{N}$ is a $d-$multiple of $S$ with $\operatorname{M}_d(S)-$embedding dimension one
if and only if there exists $x \in S$ with $\gcd(x,d) = 1$ such that $T = \langle x \rangle + d\, S$. Moreover, in this case, $x = \min (T \setminus dS)$.
\end{proposition}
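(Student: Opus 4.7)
The plan is to feed Corollary \ref{Lema32} (characterization of $d$-multiples) and Theorem \ref{Th35} (uniqueness of the minimal $\operatorname{M}_d(S)$-system of generators) into the definition of $\operatorname{M}_d(S)$-embedding dimension one, and then read off the formula $x = \min(T \setminus dS)$ directly from the presentation $T = \langle x \rangle + dS$.

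For the forward implication I would unwind the definition: $\operatorname{M}_d(S)$-embedding dimension one means the (unique, by Theorem \ref{Th35}) minimal $\operatorname{M}_d(S)$-system of generators of $T$ is some singleton $\{x\}$, which is a $\operatorname{M}_d(S)$-set, and $T = \langle x\rangle + d\,S$. Since $T$ is a $d$-multiple of $S$, Corollary \ref{Lema32} then gives $\gcd(x,d) = \gcd(\{x\}\cup\{d\}) = 1$ and $x \in S$. For the converse, given $x \in S$ with $\gcd(x,d) = 1$, I would verify the hypotheses of Corollary \ref{Lema32} for $X = \{x\}$. The only non-trivial one is $\langle x\rangle \cap d(\mathbb{N}\setminus S) = \varnothing$: if $nx = dh$, then $d \mid nx$ and $\gcd(x,d) = 1$ force $d \mid n$, so $h = (n/d)\,x \in S$ by closure, a contradiction. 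Hence $T = \langle x\rangle + d\,S \in \operatorname{M}_d(S)$. Since $\{x\}$ is a $\operatorname{M}_d(S)$-system of generators, Theorem \ref{Th35} says the minimal system is contained in $\{x\}$; it is not $\varnothing$ because that would give $T = d\,S$, forcing $x \in d\,S$ and hence $d \mid x$, which is incompatible with $\gcd(x,d) = 1$ (the case $d=1$ being degenerate, as then $\operatorname{M}_1(S) = \{S\}$).

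For the last statement, I would write a generic element of $T$ as $nx + ds$ with $n \in \mathbb{N}$, $s \in S$, and observe that $nx + ds \in d\,S$ if and only if $d \mid n$: the ``if'' direction is immediate, while for the ``only if'' the equation $nx + ds = ds'$ with $s' \in S$ gives $d \mid nx$ and thus $d \mid n$ via $\gcd(x,d)=1$. Consequently $T \setminus d\,S = \{nx + ds : n \geq 1,\ d \nmid n,\ s \in S\}$, whose minimum is attained at $n=1$, $s=0$, yielding $x$.

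There is no real obstacle: the only substantive computation is the two-line coprimality argument showing $\langle x\rangle \cap d(\mathbb{N}\setminus S) = \varnothing$, and the rest is a direct application of the previously established Corollary \ref{Lema32} and Theorem \ref{Th35}, exactly as the statement advertises.
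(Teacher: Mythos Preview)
Your proof is correct and follows exactly the route the paper indicates: the paper gives no explicit argument for this proposition, stating only that it is an immediate consequence of Corollary~\ref{Lema32} and Theorem~\ref{Th35}, and you have supplied precisely that verification, including the coprimality check $\langle x\rangle \cap d(\mathbb{N}\setminus S) = \varnothing$ and the identification $x = \min(T\setminus dS)$. The boundary case $d=1$ you flag is a wrinkle in the statement itself rather than in your reasoning.
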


Recall that a numerical semigroup $T$ is a \emph{gluing of $T_1$ and $T_2$} if $T = \lambda T_1 + \mu T_2$ for some $\lambda \in T_1 \setminus \operatorname{msg}(T_1)$ and $\mu \in T_2 \setminus \operatorname{msg}(T_2)$ with $\gcd(\lambda, \mu) = 1$ (see \cite[Chapter 8]{libro} for more details).
 
\begin{corollary}\label{gluing}
Given $T \subseteq \mathbb{N}$, there exists $d \in \mathbb{N} \setminus \{0\}$ such that $T$ is a $d-$multiple of $S$ with $\operatorname{M}_d(S)-$embedding dimension one and $\min (T \setminus dS) \not\in \operatorname{msg}(S)$ if and only if $T$ is a gluing of $\mathbb{N}$ and $S$.  
\end{corollary}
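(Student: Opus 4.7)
The plan is to use Proposition \ref{Th38} as a dictionary between the hypothesis on $T$ and a presentation $T = \langle x \rangle + d\,S$, and then match this presentation directly with that of a gluing of $\mathbb{N}$ and $S$. Recall that such a gluing is a numerical semigroup of the form $T = \lambda\,\mathbb{N} + \mu\,S = \langle \lambda \rangle + \mu\,S$ with $\lambda \in S \setminus \operatorname{msg}(S)$, $\mu \in \mathbb{N} \setminus \operatorname{msg}(\mathbb{N}) = \mathbb{N} \setminus \{1\}$, and $\gcd(\lambda,\mu) = 1$. The natural identification is $\lambda = x$ and $\mu = d$, and the proof will amount to checking that the hypothesis on the left-hand side is equivalent to this identification making sense.

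For the forward direction, I would apply Proposition \ref{Th38} to the hypothesis to write $T = \langle x \rangle + d\,S$ with $x = \min(T \setminus dS) \in S$ and $\gcd(x,d) = 1$; the extra assumption $x \notin \operatorname{msg}(S)$ then upgrades this to $x \in S \setminus \operatorname{msg}(S)$. The one subtle bookkeeping step is to exclude $d=1$: if $d=1$, then $x \in S$ would force $T = \langle x \rangle + S = S = d\,S$, making the $\operatorname{M}_d(S)$-embedding dimension of $T$ equal to zero rather than one, a contradiction. Hence $d \in \mathbb{N} \setminus \{1\}$, and $T = x\,\mathbb{N} + d\,S$ exhibits $T$ as a gluing of $\mathbb{N}$ and $S$ in the sense above.

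For the reverse direction, given a gluing $T = \lambda\,\mathbb{N} + \mu\,S$ with the conditions above, I set $d := \mu \geq 2$ and $x := \lambda \in S$, yielding $T = \langle x \rangle + d\,S$ with $\gcd(x,d) = 1$. To invoke Corollary \ref{Lema32} and conclude $T \in \operatorname{M}_d(S)$, I need $\langle x \rangle \cap d(\mathbb{N} \setminus S) = \varnothing$; this is immediate, because any identity $k\,x = d\,h$ with $k,h \in \mathbb{N}$ forces $d \mid k$ (since $\gcd(x,d)=1$), whence $h = (k/d)\,x \in S$, as $S$ is a monoid containing $x$. Then Proposition \ref{Th38} simultaneously delivers that $T$ has $\operatorname{M}_d(S)$-embedding dimension one and that $\min(T \setminus dS) = x = \lambda \notin \operatorname{msg}(S)$, as required. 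There is no serious obstacle: the proof is essentially a direct translation between the two parametrizations, with the only nontrivial checks being the exclusion of $d = 1$ in the forward direction and the verification of the $\operatorname{M}_d(S)$-set condition in the reverse direction.
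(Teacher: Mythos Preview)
Your proof is correct and follows essentially the same route as the paper's: both directions hinge on Proposition~\ref{Th38} as the dictionary between the presentation $T=\langle x\rangle+dS$ and the gluing data. You are simply more careful than the paper in two places---you explicitly rule out $d=1$ (which the paper asserts without comment) and you verify the $\operatorname{M}_d(S)$-set condition $\langle x\rangle\cap d(\mathbb{N}\setminus S)=\varnothing$ before invoking Proposition~\ref{Th38} in the reverse direction, whereas the paper applies Proposition~\ref{Th38} directly (that verification is already baked into its statement).
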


\begin{proof}
Let $x = \min(T \setminus dS)$. If there exists $d \in \mathbb{N} \setminus \{0\}$ such that $T$ is a $d-$multiple of $S$ with $\operatorname{M}_d(S)-$embedding dimension one and $x \not\in \operatorname{msg}(S)$, then, by Proposition \ref{Th38}, $T = \langle x \rangle + d\, S$ and $\gcd(x,d) = 1$. Therefore, $\operatorname{msg}(T) = \operatorname{msg}(x\, \mathbb{N}) \cup \operatorname{msg}(d\, S)$ with $x \in S \setminus \operatorname{msg}(S), d \in \mathbb{N} \setminus \{0,1\}$ and $\gcd(x,d) = 1$, that is, $T$ is a gluing of $\mathbb{N}$ and $S$.

If $T$ is a gluing of $\mathbb{N}$ and $S$ there exist $d \in \mathbb{N} \setminus \{0,1\}$ and $x \in S \setminus \operatorname{msg}(S)$ with $\gcd(x,d) = 1$, such that $T = \langle x \rangle + d\, S$. Thus, by 
Proposition \ref{Th38}, we are done.
\end{proof}

The following result follows from \cite[Lemma 1]{Brauer} which generalizes \cite[Theorem 2]{Johnson} and will be useful to solve the Frobenius problem for $d-$multiples of $S$ with $\operatorname{M}_d(S)-$em\-bed\-ding dimension one in terms of $\operatorname{F}(S)$.

\begin{lemma}\label{lema41}
Let $\{a_1, \ldots, a_k\}$ be a set of positive integers such that $\gcd(a_1, \ldots, a_{k})=1$. If $\gcd(a_2, \ldots, a_k) = b$, then 
\[\operatorname{F}(\langle a_1, a_2, \ldots, a_k \rangle) = (b-1) a_1 + b\, \operatorname{F}\left(\left\langle a_1, \frac{a_2}b, \ldots, \frac{a_{k}}b\right\rangle\right) \]
and 
\[\operatorname{g}(\langle a_1, a_2, \ldots, a_k \rangle) = \frac{(b-1) (a_1-1)}2 + b\, \operatorname{g}\left(\left\langle a_1, \frac{a_2}b, \ldots, \frac{a_{k}}b \right\rangle\right) .\]
\end{lemma}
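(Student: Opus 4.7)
The plan is to reduce everything to the Apéry-set description of the Frobenius number and the genus with respect to $a_1$. Put $T = \langle a_1, a_2, \ldots, a_k \rangle$ and $T' = \langle a_1, a_2/b, \ldots, a_k/b \rangle$. The first step is to check that $T'$ is a well-defined numerical semigroup: since $b$ divides $a_i$ for every $i \geq 2$ and $\gcd(a_1, \ldots, a_k) = 1$, we must have $\gcd(a_1, b) = 1$, and consequently $\gcd(a_1, a_2/b, \ldots, a_k/b) = 1$.

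Next I would relate the Apéry sets $\operatorname{Ap}(T, a_1)$ and $\operatorname{Ap}(T', a_1)$. Every element of $T$ admits a representation $u\, a_1 + b\, v$ with $u \in \mathbb{N}$ and $v \in \langle a_2/b, \ldots, a_k/b \rangle$, while every element of $T'$ admits a representation $u\, a_1 + v$ with the same kind of $v$. Because multiplication by $b$ permutes the residue classes modulo $a_1$ (here I use $\gcd(a_1, b) = 1$), for each residue $i$ modulo $a_1$ the smallest element of $T$ in that class is obtained as $b$ times the smallest element of $\langle a_2/b, \ldots, a_k/b \rangle$ in the class $b^{-1} i$ modulo $a_1$, and the latter is precisely the corresponding Apéry element of $T'$ relative to $a_1$. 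Hence $\operatorname{Ap}(T, a_1) = b\cdot \operatorname{Ap}(T', a_1)$ as multisets.

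The final step is to invoke the Selmer-type formulas
\[
\operatorname{F}(T) \;=\; \max \operatorname{Ap}(T, a_1) - a_1, \qquad \operatorname{g}(T) \;=\; \frac{1}{a_1}\sum_{w \in \operatorname{Ap}(T, a_1)} w \;-\; \frac{a_1 - 1}{2},
\]
together with the analogous identities for $T'$. Substituting $\operatorname{Ap}(T, a_1) = b\cdot \operatorname{Ap}(T', a_1)$ into the first identity yields $\operatorname{F}(T) = b\,\operatorname{F}(T') + (b-1)\, a_1$, and into the second gives $\operatorname{g}(T) = b\,\operatorname{g}(T') + (b-1)(a_1-1)/2$, which are exactly the claimed formulas.

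The only subtle ingredient is the Apéry-set bijection in the middle paragraph, which I expect to be the main (and essentially only) obstacle; once it is established the rest is bookkeeping from Selmer's formulas. Alternatively, once $\gcd(a_1,b)=1$ is noted, the statement can be obtained directly from \cite[Lemma 1]{Brauer} applied to the family $\{a_1, a_2, \ldots, a_k\}$ with distinguished element $a_1$, which is essentially the same computation repackaged.
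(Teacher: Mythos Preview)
The paper does not supply a proof of this lemma at all: the sentence introducing it simply says that the result ``follows from \cite[Lemma 1]{Brauer} which generalizes \cite[Theorem 2]{Johnson},'' and no further argument is given. Your closing remark already identifies exactly this citation, so in that sense you match the paper.

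Your self-contained argument via Ap\'ery sets is correct and is the standard way to recover the Brauer--Shockley formula. The key identity $\operatorname{Ap}(T,a_1)=b\cdot\operatorname{Ap}(T',a_1)$ holds for the reason you give: since $\gcd(a_2/b,\ldots,a_k/b)=1$, the monoid $M=\langle a_2/b,\ldots,a_k/b\rangle$ is a numerical semigroup, every element of $T$ (resp.\ $T'$) in residue class $r$ modulo $a_1$ has the form $u\,a_1+b\,w$ (resp.\ $u\,a_1+w$) with $w\in M$, and minimizing forces $u=0$; the bijection $r\mapsto b^{-1}r$ on residues then yields the claimed scaling. Substituting into Selmer's formulas is routine, and your computations for $\operatorname{F}$ and $\operatorname{g}$ are correct. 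There is no gap.
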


\begin{proposition}\label{prop42}
If $T$ is a $d-$multiple of $S$ with $\operatorname{M}_d(S)-$embedding dimension one, then 
\[\operatorname{F}(T) = (d-1) 
\min (T \setminus dS) + d \operatorname{F}(S)\]
and
\[\operatorname{g}(T) = \frac{(d-1) \min (T \setminus dS)-1)}2 + d \operatorname{g}(S).\]
\end{proposition}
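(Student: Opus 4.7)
The plan is to reduce the statement directly to Lemma \ref{lema41} (the Brauer--Johnson type formula) applied to an explicit generating set of $T$.

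First I would invoke Proposition \ref{Th38} to write $T = \langle x \rangle + d\, S$ for $x = \min(T \setminus dS)$ with $\gcd(x,d)=1$, and then fix $\operatorname{msg}(S) = \{n_1, \ldots, n_k\}$. Since $d\, S = \langle d n_1, \ldots, d n_k\rangle$, the submonoid $T$ is generated by the finite set $\{x, d n_1, \ldots, d n_k\}$. Moreover, because $\gcd(n_1, \ldots, n_k) = 1$ (as $S$ is a numerical semigroup), one has $\gcd(d n_1, \ldots, d n_k) = d$. This is precisely the situation of Lemma \ref{lema41} with $a_1 = x$, $a_{i+1} = d n_i$ and $b = d$; note also that $\gcd(x, d n_1, \ldots, d n_k) = \gcd(x,d) = 1$ as required.

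Next I would apply Lemma \ref{lema41} to obtain
\[
\operatorname{F}(T) = (d-1)\, x + d\, \operatorname{F}\bigl(\langle x, n_1, \ldots, n_k\rangle\bigr)
\]
and the analogous equality for $\operatorname{g}(T)$. The only remaining point is to identify $\langle x, n_1, \ldots, n_k\rangle$ with $S$: since $x$ is an element of any $\operatorname{M}_d(S)$-set (which by the observation after Definition of $\operatorname{M}_d(S)$-set is contained in $S$, as noted just before Proposition \ref{Lema27}), we have $x \in S = \langle n_1, \ldots, n_k\rangle$, so adjoining $x$ does nothing. Substituting yields the two claimed formulas.

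I do not expect any real obstacle; the only thing to be careful about is verifying that the gcd conditions in Lemma \ref{lema41} are met and that $x \in S$ so that the residual semigroup simplifies to $S$ itself. The computation is otherwise a direct specialization of the Brauer--Johnson reduction.
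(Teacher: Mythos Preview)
Your proposal is correct and follows essentially the same route as the paper: invoke Proposition~\ref{Th38} to get $T=\langle x\rangle + dS$ with $\gcd(x,d)=1$, take a generating set $\{x, d n_1,\ldots,d n_k\}$ of $T$, apply the Brauer--Johnson reduction (Lemma~\ref{lema41}), and use $x\in S$ to identify $\langle x, n_1,\ldots,n_k\rangle$ with $S$. The only cosmetic difference is ordering: the paper records $x\in S$ first (so that $\{x,n_1,\ldots,n_k\}$ generates $S$) and then applies the lemma, whereas you apply the lemma and simplify afterwards.
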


\begin{proof}
Let $x = \min (T \setminus dS)$. Since $x \in S$, if $\{a_2, \ldots, a_k\}$ is a system of generators of $S$, then $\{x, a_2, \ldots, a_k\}$ is a system of generators of $S$, too. So, by Proposition \ref{Th38}, we have that $\{x, d a_2, \ldots, d a_k\}$ is a system of generators of $T$, and the desired result follows directly from the Lemma \ref{Lema32}.
\end{proof}

Notice that if $T$ is a $d-$multiple of $S$ with $\operatorname{M}_d(S)-$embedding dimension one, then, by Proposition \ref{prop8I}, $T \not\in \max \operatorname{M}_d(S)$ whenever $d \neq 1$.

\begin{corollary}
Let $T$ be a $d-$multiple of $S$ with $\operatorname{M}_d(S)-$embedding dimension one, if $x = \min(T \setminus dS),$ then \[T \cup \theta_S^d(T) = \langle x, (d-1) x + d \operatorname{F}(S) \rangle + d\, S.\]    
\end{corollary}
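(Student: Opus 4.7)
The plan is to compute $\theta_S^d(T)$ explicitly via the Frobenius number of $T$, and then identify the union $T \cup \{\theta_S^d(T)\}$ with the claimed $\operatorname{M}_d(S)$-monoid by a short double inclusion.

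First I would invoke Proposition \ref{Th38} to write $T = \langle x \rangle + d\,S$ with $\gcd(x,d)=1$ and $x \in S$. One may assume $d \geq 2$, since the remark immediately preceding the corollary already records that $T \notin \max \operatorname{M}_d(S)$ whenever $d \neq 1$, which is precisely what guarantees that $\theta_S^d(T)$ is defined (Lemma \ref{lema7}). Applying Proposition \ref{prop42}, the Frobenius number of $T$ is
\[
\operatorname{F}(T) = (d-1)x + d\,\operatorname{F}(S).
\]
The key observation is then that $\operatorname{F}(T)$ is not divisible by $d$: modulo $d$ it equals $-x$, and the conditions $\gcd(x,d)=1$ together with $d \geq 2$ force $x \not\equiv 0 \pmod d$. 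Hence Corollary \ref{Cor13I} applies and yields
\[
\theta_S^d(T) = \operatorname{F}(T) = (d-1)x + d\,\operatorname{F}(S).
\]

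Once this formula is in hand, the identity $T \cup \{\theta_S^d(T)\} = \langle x, (d-1)x + d\,\operatorname{F}(S) \rangle + d\,S$ follows from a two-line set-theoretic argument. By Lemma \ref{lema7}, $T \cup \{\theta_S^d(T)\}$ lies in $\operatorname{M}_d(S)$; in particular it is a submonoid of $(\mathbb{N},+)$ containing $x$, containing the element $(d-1)x + d\,\operatorname{F}(S)$, and containing all of $d\,S$ by Lemma \ref{lema2}. Therefore it contains $\langle x, (d-1)x + d\,\operatorname{F}(S) \rangle + d\,S$. Conversely, $\langle x \rangle + d\,S = T$ is clearly contained in the right-hand side, and $(d-1)x + d\,\operatorname{F}(S)$ belongs trivially to $\langle x, (d-1)x + d\,\operatorname{F}(S) \rangle \subseteq \langle x, (d-1)x + d\,\operatorname{F}(S) \rangle + d\,S$, so $T \cup \{\theta_S^d(T)\}$ is contained in the right-hand side as well.

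I do not expect any serious obstacle here: the only slightly delicate point is verifying $d \nmid \operatorname{F}(T)$, which reduces immediately to the coprimality hypothesis $\gcd(x,d) = 1$ together with the implicit $d \geq 2$. All remaining steps are direct consequences of results already proved in the excerpt, namely Propositions \ref{Th38} and \ref{prop42}, Corollary \ref{Cor13I}, and Lemmas \ref{lema2} and \ref{lema7}.
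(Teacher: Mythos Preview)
Your proof is correct and follows essentially the same route as the paper: compute $\operatorname{F}(T)$ via Proposition \ref{prop42}, observe that $\gcd(x,d)=1$ forces $d\nmid\operatorname{F}(T)$, and invoke Corollary \ref{Cor13I} to conclude $\theta_S^d(T)=\operatorname{F}(T)$. The paper's proof ends there with ``and we are done,'' leaving the set equality implicit; your double-inclusion argument and your explicit handling of the hypothesis $d\geq 2$ simply make precise what the paper takes for granted.
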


\begin{proof}
By Proposition \ref{prop42},  $\operatorname{F}(T) = (d-1) x + d \operatorname{F}(S)$. Since, $\gcd(x,d) =1$, then $d$ does not divide $\operatorname{F}$. So, by Corollary \ref{Cor13I}, $\theta_S^d(T) = \operatorname{F}(T)$ and we are done.
\end{proof}

Let us compute the pseudo-Frobenius of the $d-$multiples of $S$ with $\operatorname{M}_d(S)-$embedding dimension one in terms of $S$. To do this, it is convenient to recall that given a numerical semigroup $\Delta$, by \cite[Proposition 2.19]{libro}, we have that \[\operatorname{PF}(\Delta) = \operatorname{maximals}_{\preceq \Delta} (\mathbb{Z} \setminus \Delta),\] where $\preceq_S$ is the partial order on $\mathbb{Z}$ such that $x \preceq_\Delta y$ if and only if $y - x \in \Delta$. Recall also that the type of a numerical semigroup $\Delta$, denoted $\operatorname{t}(\Delta)$, is the cardinality of $\operatorname{PF}(\Delta)$.

\begin{theorem}\label{Cor46}
If $T$ is a $d-$multiple of $S$ with $\operatorname{M}_d(S)-$embedding dimension one, then 
\[\operatorname{PF}(T) = \{d f + (d-1) \min(T \setminus dS) \mid f \in \operatorname{PF}(S)\}.\] In particular, $\operatorname{t}(T) = \operatorname{t}(S)$.
\end{theorem}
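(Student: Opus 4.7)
The plan is to reduce everything to a normal-form calculation enabled by the coprimality relation $\gcd(x,d)=1$ coming from Proposition \ref{Th38}. Write $x = \min(T \setminus dS)$, so $T = \langle x \rangle + d\,S$ with $x \in S$ and $\gcd(x,d) = 1$; the case $d = 1$ is trivial (then $T = S$, the $\operatorname{M}_d(S)$-embedding dimension would be zero, contradicting the hypothesis), so I may assume $d \geq 2$, whence $x \in S \setminus \{0\}$. Since $\gcd(x,d)=1$, every integer $n$ admits a \emph{unique} representation $n = k\,x + d\,m$ with $k \in \{0, 1, \ldots, d-1\}$ and $m \in \mathbb{Z}$; call this pair $(k(n), m(n))$ the \emph{normal form} of $n$. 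Reducing any sum $\ell x + d\,s$ with $\ell \geq 0$ and $s \in S$ by trading each occurrence of $d\,x$ for $d \cdot x \in d\,S$ shows that $n \in T$ if and only if $m(n) \in S$.

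For the inclusion $\{d f + (d{-}1) x \mid f \in \operatorname{PF}(S)\} \subseteq \operatorname{PF}(T)$, fix $f \in \operatorname{PF}(S)$ and set $z = d f + (d-1) x$, whose normal form is $(d-1, f)$. Because $f \not\in S$, one has $z \notin T$. Given $t \in T \setminus \{0\}$, write $t = k' x + d\, s'$ with $k' \in \{0, \ldots, d-1\}$ and $s' \in S$; I will show $z + t \in T$ by computing its normal form. If $k' = 0$ then $s' \in S \setminus \{0\}$ (because $t \neq 0$) and the normal form of $z+t$ is $(d-1, f + s')$, whose second coordinate lies in $S$ by definition of pseudo-Frobenius. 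If $k' \geq 1$, then $(d-1) + k' \geq d$ and the normal form of $z + t$ is $(k'-1, f + s' + x)$; since $x \in S \setminus \{0\}$, we have $s' + x \in S \setminus \{0\}$ and hence $f + s' + x \in S$. In both cases $z + t \in T$, so $z \in \operatorname{PF}(T)$.

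For the reverse inclusion, take $z \in \operatorname{PF}(T)$ with normal form $(k_0, f)$. Since $z \notin T$, $f \notin S$. Adding $x \in T \setminus \{0\}$ yields $z+x \in T$; its normal form is $(k_0+1, f)$ if $k_0 < d-1$, which would force $f \in S$, contradiction. Hence $k_0 = d-1$. Then for every $s \in S \setminus \{0\}$ we have $d\,s \in T \setminus \{0\}$, and $z + d\,s$ has normal form $(d-1, f+s)$; requiring $z + d s \in T$ forces $f + s \in S$ for all $s \in S \setminus \{0\}$, i.e., $f \in \operatorname{PF}(S)$. Thus $z = d f + (d-1) x$ with $f \in \operatorname{PF}(S)$, completing the equality of sets. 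Uniqueness of the normal form makes the map $f \mapsto d f + (d-1) x$ injective, hence bijective, so $\operatorname{t}(T) = \#\operatorname{PF}(T) = \#\operatorname{PF}(S) = \operatorname{t}(S)$.

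The substantive step is establishing the normal-form dictionary $n \in T \Longleftrightarrow m(n) \in S$; once it is in place, everything reduces to bookkeeping with the two natural ``moves'' $+x$ (which increments $k$ modulo $d$) and $+ds$ (which increments $m$ by $s \in S$). All the pseudo-Frobenius conditions can then be verified by tracking how these moves act on the normal form, and no further hidden difficulty is anticipated.
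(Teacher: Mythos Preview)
Your proof is correct. The normal-form dictionary $n\in T\Longleftrightarrow m(n)\in S$ is sound (the backward direction is immediate, and the forward one follows by writing $\ell=qd+r$ and absorbing $qx$ into the $S$-part using $x\in S$), and both inclusions are cleanly verified by the case analysis on $k'$ and $k_0$.

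The paper takes a closely related but differently packaged route: it invokes the Ap\'ery-set identity $\operatorname{Ap}(T,x)=d\,\operatorname{Ap}(S,x)$ (cited from \cite[Lemma~2.16]{libro}) in the form $\{y+x\in T\mid y\notin T\}=d\{z+x\in S\mid z\notin S\}$, and then argues each inclusion by contradiction using the characterization $\operatorname{PF}(\Delta)=\operatorname{maximals}_{\preceq_\Delta}(\mathbb{Z}\setminus\Delta)$. Your normal-form criterion is essentially a self-contained proof of (a strengthening of) that Ap\'ery identity, so you avoid the external citation and the maximality/contradiction step at the cost of a short direct computation. What the paper's framing buys is a one-line reduction to a known lemma; what yours buys is a more transparent, constructive verification that makes the bijection $f\mapsto df+(d-1)x$ and its inverse visible without any appeal to $\preceq_T$-maximality.
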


\begin{proof}
Let $x = \min(T \setminus dS)$. By Theorem \ref{Th35} and Proposition \ref{Th38}, if $\{a_1, \ldots, a_k\}$ is a system of generators of $S$, then $\{x, d a_1, \ldots, d a_k\}$ is a system of generators of $T$ and $\gcd(d,x)=1$. Moreover, since $d x \in T,$ we have that $x \in S$, so $\{x, a_1, \ldots, a_k\}$ is a system of generators of $S$, too; thus, by \cite[Lemma 2.16]{libro}, \begin{equation}\label{ecuAp}\{y+x \in T \mid y \not\in T\} = d \{z+x \in S \mid z \not\in S\}.\end{equation} 

Now, if $g \in \operatorname{PF}(T) = \operatorname{maximals}_{\preceq T} (\mathbb{Z} \setminus T)$ then, in particular, $g + x \in T$ and $g \not\in T$. Therefore, by \eqref{ecuAp}, $g = d z + (d-1) x$, for some $z \not\in S$ with $z + x \in S$. If $z \not\in \operatorname{PF}(S) = \operatorname{maximals}_{\preceq S} (\mathbb{Z} \setminus S)$, then there exists $f \in \operatorname{PF}(S)$ such that $z \prec_S f$; in this case, we have that $d f + (d-1) x \not\in T$ because $df \not\in T$ and $\gcd(x,d) = 1$. Therefore, $g = dz + (d-1) x \prec_T d f + (d-1) x$, in contradiction to the maximality of $g$.

Conversely, if $f \in \operatorname{PF}(S) = \operatorname{maximals}_{\preceq S} (\mathbb{Z} \setminus S)$, in particular, $f + x \in S$ and $f \not\in S$; therefore, by \eqref{ecuAp}, $d(f+x) \in T$ and $y:=df + (d-1) x = d(f+x) - x \not\in T$. If $y \not\in \operatorname{PF}(T) = \operatorname{maximals}_{\preceq T} (\mathbb{Z} \setminus T)$, there exists $g \in \operatorname{PF}(T)$ such that $y \prec_T g$; in this case, there exists $z \not\in S$ with $z + x \in S$ such that $g = d z + (d-1) x$. Now, since $d(z - f) = g - y \in T$, we conclude that $z - f \in S$, that is, $f \prec_S z$, in contradiction to the maximality of $f$.
\end{proof}

\begin{example}
Let $S = \langle 5,7,9 \rangle$ and $T = \langle 9 \rangle + 2 S$. Since $\operatorname{PF}(S) = \{11, 13\}$, then $\operatorname{PF}(T) = \{31,35\}$ by Theorem \ref{Cor46}.   
\end{example}

Numerical semigroups of type one are called symmetric, these are exactly the irreducible number semigroups with odd Frobenius number (see Remark \ref{RemIrr}). The study of symmetric numerical semigroups has its own interest since they define arithmetically Gorenstein monomial curves (see \cite{Barucci}).

The following result is an immediate consequence of Theorem \ref{Cor46}

\begin{corollary}
Let $T$ be a $d-$multiple of $S$ with $\operatorname{M}_d(S)-$embedding dimension one. Then $S$ is symmetric if and only if $T$ is symmetric.
\end{corollary}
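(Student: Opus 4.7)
The statement is essentially a one-line consequence of Theorem \ref{Cor46}, together with the classical characterization of symmetric numerical semigroups by their type. My plan is therefore to invoke these two ingredients and chain them.

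First I would recall that a numerical semigroup $\Delta$ is symmetric if and only if $\operatorname{t}(\Delta) = 1$; this is the standard characterization mentioned just before the corollary (symmetric numerical semigroups are the numerical semigroups of type one). Applying this to both $S$ and $T$, the equivalence to prove reduces to showing that $\operatorname{t}(S) = 1$ if and only if $\operatorname{t}(T) = 1$.

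Next I would invoke the \emph{In particular} part of Theorem \ref{Cor46}, which asserts that $\operatorname{t}(T) = \operatorname{t}(S)$ whenever $T$ is a $d$-multiple of $S$ with $\operatorname{M}_d(S)$-embedding dimension one. The equality of types is an immediate consequence of the explicit bijection $f \mapsto d f + (d-1) \min(T \setminus dS)$ between $\operatorname{PF}(S)$ and $\operatorname{PF}(T)$ displayed in that theorem, so no further work is needed here.

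Combining the two steps gives the desired equivalence: $S$ symmetric $\Longleftrightarrow \operatorname{t}(S) = 1 \Longleftrightarrow \operatorname{t}(T) = 1 \Longleftrightarrow T$ symmetric. There is no real obstacle; the proof is a direct corollary, and the only thing to be mindful of is to cite Theorem \ref{Cor46} explicitly and to state the type-one characterization (with an appropriate reference to \cite[Chapter 4]{libro} or the discussion preceding the corollary) so that the argument is self-contained.
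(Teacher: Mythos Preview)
Your proposal is correct and matches the paper's approach exactly: the paper states that the corollary is an immediate consequence of Theorem~\ref{Cor46}, and your argument spells out precisely this, using $\operatorname{t}(T)=\operatorname{t}(S)$ together with the characterization of symmetric numerical semigroups as those of type one.
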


\section{On numerical semigroups with full quotient rank}\label{Sect6}

Let $S$ be a numerical semigroup. In this section we give a sufficient condition for $S$ to have full quotient rank, that is, for $S$ to verify the following equality: \[\min\{\operatorname{e}(T) \mid T\ \text{is a multiple of}\ S\} = \operatorname{e}(S).\] To do this, let us first recall the notion of Ap\'ery set of $S$ with respect to $x \in S \setminus \{0\}$.

\begin{definition}
Let $S$ be a numerical semigroup and let $x$ be one of its nonzero elements. The \emph{Ap\'ery set of $S$ with respect to $x$} is 
\[\operatorname{Ap}(S,x) := \{y \in S \mid y - x \not\in S\}.\]
\end{definition}

\begin{proposition}\label{Prop_full_rank}
Let $S$ be a numerical semigroup such that $\operatorname{msg}(S) = \{a_1, \ldots, a_e\}$. If 
\begin{equation}\label{ecu_fullrank}
\sum_{\substack{j=1\\ j \neq i}}^e a_j \in \operatorname{Ap}(S,a_i)\ \text{for every}\ i \in \{1, \ldots, e\},
\end{equation} then $S$ has full quotient rank.
\end{proposition}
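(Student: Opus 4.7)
The plan is to show that any $d$-multiple $T$ of $S$ satisfies $\operatorname{e}(T) \geq e$; combined with the standing bound $\operatorname{e}(T) \leq e$ (attained at $d = 1$, $T = S$), this forces the quotient rank of $S$ to equal $e$. Write $\operatorname{msg}(T) = \{b_1, \ldots, b_m\}$ and, for each $i \in \{1, \ldots, e\}$, fix a factorization $d a_i = \sum_{j=1}^m c_{ij} b_j$ with $c_{ij} \in \mathbb{N}$ (possible since $d a_i \in T$). The goal is to produce an injection $\{1, \ldots, e\} \to \{1, \ldots, m\}$, as this will yield $m \geq e$.

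I will construct such an injection $i \mapsto j^\ast(i)$ by selecting, for each $i$, an index at which the factorization of $d a_i$ dominates the combined factorizations of the remaining $d a_k$'s. To locate $j^\ast(i)$, consider
\[
y_i \;:=\; d \sum_{k \neq i} a_k \;-\; d a_i \;=\; \sum_{j=1}^m \alpha_{ij}\, b_j, \qquad \text{where}\ \alpha_{ij} := \sum_{k \neq i} c_{kj} - c_{ij}.
\]
If $\sum_{k \neq i} a_k \geq a_i$, then $y_i \geq 0$, and hypothesis \eqref{ecu_fullrank} gives $y_i/d = \sum_{k \neq i} a_k - a_i \notin S$, whence $y_i \notin T$; so not every $\alpha_{ij}$ can be non-negative (else the displayed identity would exhibit $y_i$ as an $\mathbb{N}$-linear combination of elements of $\operatorname{msg}(T)$, forcing $y_i \in T$). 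If instead $\sum_{k \neq i} a_k < a_i$, then $y_i < 0$, and since each $b_j > 0$ not every $\alpha_{ij}$ can be $\geq 0$ either. In both regimes some $\alpha_{i, j^\ast(i)} < 0$, which is exactly the inequality $c_{i, j^\ast(i)} > \sum_{k \neq i} c_{k, j^\ast(i)}$.

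Injectivity of $j^\ast$ follows by a one-line antisymmetry argument: if $j^\ast(i_1) = j^\ast(i_2) = j$ with $i_1 \neq i_2$, then $c_{i_1, j} > \sum_{k \neq i_1} c_{k, j} \geq c_{i_2, j}$ and symmetrically $c_{i_2, j} > c_{i_1, j}$, which is absurd. Therefore $m \geq e$, completing the proof. The delicate point I anticipate is the case $\sum_{k \neq i} a_k < a_i$, in which \eqref{ecu_fullrank} says nothing substantive (the condition becomes vacuous because $\sum_{k \neq i} a_k - a_i$ is already negative); happily the same sign-of-sum argument still delivers $j^\ast(i)$, provided one invokes the positivity of the $b_j$'s, so the two regimes are handled uniformly.
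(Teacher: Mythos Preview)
Your proof is correct but takes a genuinely different route from the paper's. The paper argues by contradiction and invokes an external characterization from \cite[Corollary~2.2]{australia}: if $S$ fails to have full quotient rank, then some subset-sum $\sum_{j\in J} a_j$ is expressible as an $\mathbb{N}$-combination of the complementary generators. From this relation the paper picks an index $i\notin J$ appearing with positive coefficient and rewrites $\sum_{j\neq i} a_j$ so that $\sum_{j\neq i} a_j - a_i \in S$, contradicting~\eqref{ecu_fullrank}. Your argument is self-contained: for an arbitrary $d$-multiple $T$ you build an explicit injection $\{1,\dots,e\}\to\operatorname{msg}(T)$ by locating, for each $i$, a generator $b_{j^\ast(i)}$ that the chosen factorization of $d a_i$ uses strictly more than all the other $d a_k$ combined; hypothesis~\eqref{ecu_fullrank} (or bare positivity, in the degenerate regime) guarantees such an index exists. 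Your approach avoids the external citation and makes the mechanism behind $\operatorname{e}(T)\ge e$ quite explicit; the paper's approach is shorter precisely because that combinatorial step has been outsourced to \cite{australia}.
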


\begin{proof}
Suppose instead that $S$ does not have full quotient rank. Then, by \cite[Corollary 2.2]{australia}, there exists $J \subseteq \{1, \ldots, e\}$ such that $\sum_{j \in J} a_j = \sum_{j \in \{1, \ldots, e\} \setminus J} u_j a_j$ for some $u_j \in \mathbb{N}, \ j \in \{1, \ldots, e\} \setminus J,$ not all zero. Let $i \in \{1, \ldots, e\} \setminus J$ be such that $u_i \neq 0$. Since \begin{align*}\sum_{\substack{j=1\\ j \neq i}}^e a_j & = \sum_{j \in J} a_j + \sum_{j \in \{1, \ldots, e\} \setminus (J \cup \{i\})} a_j = \sum_{j \in \{1, \ldots, e\} \setminus J} u_j a_j + \sum_{j \in \{1, \ldots, e\} \setminus (J \cup \{i\})} a_j\\ & = u_i a_i + \sum_{j \in \{1, \ldots, e\} \setminus (J \cup \{i\})} (u_j+1) a_j,\end{align*} we have that \[\sum_{\substack{j=1\\ j \neq i}}^e a_j - a_i = (u_i-1) a_i + \sum_{j \in \{1, \ldots, e\} \setminus (J \cup \{i\})} (u_j+1) a_j \in S,\] contradicting the hypothesis.
\end{proof}

It is worth asking whether there are numerical semigroups that satisfy the condition \eqref{ecu_fullrank}. The answer is yes, as the following example shows.

\begin{example}
Let $S = \langle 21,24,25,31 \rangle$. Using the function 
\begin{center}
\texttt{AperyListOfNumericalSemigroupWRTElement}    
\end{center}
included in the GAP (\cite{GAP}) package \texttt{NumericalSgps} (\cite{numericalsgps}), we can easily check that 
\[
24+25+31 = 80 \in \operatorname{Ap}(S,21),\quad 21+25+31 = 77 \in \operatorname{Ap}(S,24),
\]
\[
21+24+31 = 76 \in \operatorname{Ap}(S,25) \quad \text{and}\quad 21+24+25 = 70 \in \operatorname{Ap}(S,31).
\]
Therefore, by Proposition \ref{Prop_full_rank}, we conclude that $S$ has full quotient rank.
\end{example}

Note that the semigroup in the example above is not one of those shown in \cite[Theorem 3.1]{australia}. Let us introduce a new family of numerical semigroups with full quotient rank. 

In \cite{unique}, \emph{numerical semigroups having unique Betti element} are introduced. By \cite[Theorem 12]{unique}, we have that a numerical semigroup of embedding dimension $e \geq 2$ has unique Betti number if and only if there exist relatively prime integers $c_1, \ldots, c_e$ greater than one such that \[\operatorname{msg}(S) = \left\{\prod_{\substack{j=1\\ j \neq i}}^e c_j \mid i \in \{1, \ldots, e\}\right\}.\]

\begin{corollary}\label{Cor41I}
If $S$ is a numerical semigroup with a unique Betti element, then it has full quotient rank.
\end{corollary}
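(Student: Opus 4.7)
The plan is to apply Proposition \ref{Prop_full_rank} to the explicit presentation of $S$ afforded by \cite[Theorem 12]{unique}, which writes $\operatorname{msg}(S) = \{a_1, \ldots, a_e\}$ with $a_i = \prod_{j \neq i} c_j$ for integers $c_j > 1$ satisfying $\gcd(c_1, \ldots, c_e)=1$. First I would upgrade this to pairwise coprimality of the $c_j$'s: any prime common to two of them would divide every $a_i$ and hence $\gcd(\operatorname{msg}(S))$, which equals $1$ since $S$ is a numerical semigroup. From this I would record the arithmetic facts used throughout: $c_i \mid a_j$ for $j \neq i$, $\gcd(a_i, c_i) = 1$, and the common product $c_i a_i = \prod_k c_k$ is independent of $i$.

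Fix $i \in \{1, \ldots, e\}$. Since $\sum_{j \neq i} a_j$ visibly lies in $S$, condition \eqref{ecu_fullrank} for index $i$ reduces to showing $\sum_{j \neq i} a_j - a_i \notin S$. Arguing by contradiction, I would suppose
\[
\sum_{j \neq i} a_j \;=\; (1 + u_i)\, a_i + \sum_{k \neq i} u_k a_k
\]
for some $u_k \in \mathbb{N}$, and then extract two families of congruences. Reducing modulo $c_i$ annihilates every $a_j$ with $j \neq i$, giving $(1+u_i) a_i \equiv 0 \pmod{c_i}$; as $a_i$ is coprime to $c_i$, this forces $u_i \geq c_i - 1$. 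Reducing modulo $c_k$ for each fixed $k \neq i$ kills, on both sides, every summand except the one carrying $a_k$, yielding $a_k \equiv u_k a_k \pmod{c_k}$, hence $u_k \equiv 1 \pmod{c_k}$; since $c_k \geq 2$ this rules out $u_k = 0$ and gives $u_k \geq 1$.

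Plugging these bounds back in produces
\[
\sum_{j \neq i} a_j \;\geq\; c_i\, a_i + \sum_{k \neq i} a_k \;=\; c_i\, a_i + \sum_{j \neq i} a_j,
\]
i.e.\ $c_i a_i \leq 0$, which is absurd. So \eqref{ecu_fullrank} holds for every $i$ and Proposition \ref{Prop_full_rank} supplies the conclusion.

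The step I expect to be the real obstacle is the mod $c_k$ analysis for $k \neq i$. Using only the mod $c_i$ bound $u_i \geq c_i - 1$ together with the trivial $u_k \geq 0$ would at best yield $\sum_{j \neq i} 1/c_j \geq 1$, an inequality that is perfectly compatible with the $c_j$'s (for instance whenever $e$ is large and the $c_j$'s are small primes). It is precisely the sharper estimate $u_k \geq 1$ for \emph{every} $k \neq i$, which in turn rests on the pairwise coprimality of the $c_k$'s, that converts the bookkeeping into a genuine contradiction.
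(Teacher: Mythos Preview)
Your argument is correct. Both you and the paper reduce to verifying condition \eqref{ecu_fullrank} of Proposition \ref{Prop_full_rank} after invoking \cite[Theorem 12]{unique} to write $a_i = \prod_{j\neq i} c_j$, but the verifications diverge from there. The paper quotes two further external results: \cite[Corollary 10]{unique} for generators of the toric ideal and \cite[Theorem 6]{MOT} for an explicit description $\operatorname{Ap}(S,a_i) = \{\sum_{j\neq i} u_j a_j : 0 \le u_j < c_j\}$, from which the membership $\sum_{j\neq i} a_j \in \operatorname{Ap}(S,a_i)$ is immediate. You instead prove $\sum_{j\neq i} a_j - a_i \notin S$ directly by the congruence bookkeeping modulo each $c_k$, using only the pairwise coprimality that you extract from $\gcd(\operatorname{msg}(S))=1$. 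Your route is more self-contained and avoids the machinery of Betti elements and the Ap\'ery-set computation from \cite{MOT}; the paper's route is shorter once those black boxes are accepted. Your observation about pairwise coprimality is a genuine ingredient: the cited theorem only asserts $\gcd(c_1,\ldots,c_e)=1$, and without the upgrade neither $\gcd(a_i,c_i)=1$ nor the mod $c_k$ step would go through.
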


\begin{proof}
Let $\operatorname{msg}(S) = \{a_1, \ldots, a_e\}$. Since $S$ is a numerical semigroup having a unique Betti element, by \cite[Theorem 12]{unique}, there exist relatively prime integers $c_1, \ldots, c_e$ greater than one such that $a_i = \prod_{\substack{j=1\\ j \neq i}}^e c_j,\ i = 1, \ldots, e$.

Let $\Bbbk$ be a field and consider the $\Bbbk-$algebra homomorphism \[\varphi : \Bbbk[x_1, \ldots, x_e] \longrightarrow \Bbbk[S] := \bigoplus_{s \in S} \Bbbk \{\chi^s\} \] such that $\varphi(x_i) = \chi^{a_i},\ i = 1, \ldots, e$. By \cite[Corollary 10]{unique}, a system of generators of $\ker \varphi$ is $\{x_i^{c_i} - x_j^{c_j} \mid 1 \leq i < j \leq e\}$. Now, by \cite[Theorem 6]{MOT}, we have that \[\operatorname{Ap}(S,a_i) = \left\{\sum_{\substack{j=1\\ j \neq i}}^e u_j a_j \mid 0 \leq u_j \leq c_j,\ j \in \{1, \ldots, e\} \setminus \{i\} \right\},\] for every $i \in \{1, \ldots, e\}$. In particular, $\sum_{\substack{j=1\\ j \neq i}}^e a_j \in \operatorname{Ap}(S,a_i),$ for every $i \in \{1, \ldots, e\}$ and, by Proposition \ref{Prop_full_rank}, we are done.
\end{proof}

Observe that if $S$ is numerical semigroup with a unique Betti element and embedding dimension $e$, then it is the gluing of $\mathbb{N}$ and a numerical semigroup with a unique Betti element and embedding dimension $e-1$ (see Corollary \ref{gluing}); in particular, they are multiples of $\operatorname{M}_d(S/d)-$embedding dimension one. 

\begin{remark}
Using \cite[Proposition 3.10]{uf} and the same argument as that applied in the proof of Corollary \ref{Cor41I}, one can prove that universally free numerical semigroups (introduced in \cite{uf}) also have full quotient rank.
\end{remark}

Finally, we delve into some questions related to Proposition \ref{Prop_full_rank}. Of course, the most immediate question is whether the condition \eqref{ecu_fullrank} is also necessary. So far we do not have sufficient evidence to formulate a conjecture, since all examples of numerical semigroups having full rank occur as a by-product of the results in \cite{australia}. 

Apart from this, an interesting consequence of \eqref{ecu_fullrank} is the following.

\begin{proposition}\label{lema_multi}
Let $S$ be a numerical semigroup such that $\operatorname{msg}(S) = \{a_1 < \ldots < a_e\}$. If 
$\sum_{\substack{j=1\\ j \neq i}}^e a_j \in \operatorname{Ap}(S,a_i)$ for every $i \in \{1, \ldots, e\}$, then $a_1 \geq 2^{e-1}$.
\end{proposition}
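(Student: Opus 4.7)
The plan rests on the fundamental fact that $\operatorname{Ap}(S, a_1)$ contains exactly one representative of each residue class modulo $a_1$, and so has cardinality $a_1$ (see, e.g., \cite[Lemma 2.4]{libro}). For each subset $I \subseteq \{2, \ldots, e\}$, set $\sigma_I := \sum_{j \in I} a_j \in S$. I aim to prove that the map $I \longmapsto \sigma_I \bmod a_1$ is injective on the power set of $\{2, \ldots, e\}$; since its domain has $2^{e-1}$ elements and its codomain has $a_1$ elements, this injectivity delivers the sought bound $a_1 \geq 2^{e-1}$ at once.

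I will establish the injectivity by contradiction. Suppose $I_1 \neq I_2$ are subsets of $\{2, \ldots, e\}$ with $\sigma_{I_1} \equiv \sigma_{I_2} \pmod{a_1}$, and set $I' := I_1 \setminus I_2$ and $J' := I_2 \setminus I_1$; these are disjoint subsets of $\{2, \ldots, e\}$, not both empty, and satisfy $\sigma_{I'} \equiv \sigma_{J'} \pmod{a_1}$. After possibly swapping $I_1$ and $I_2$, I may assume $\sigma_{I'} - \sigma_{J'} = k\, a_1$ for some integer $k \geq 0$. For a well-chosen index $i$, I will then exhibit the witness $\sum_{j \neq i} a_j - a_i \in S$, contradicting the hypothesis $\sum_{j \neq i} a_j \in \operatorname{Ap}(S, a_i)$.

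When both $I'$ and $J'$ are nonempty I pick any $i \in J'$ and use the partition $\{2, \ldots, e\} \setminus \{i\} = I' \sqcup (J' \setminus \{i\}) \sqcup \mathrm{rest}$, where $\mathrm{rest} := \{2, \ldots, e\} \setminus (I' \cup J')$. Substituting $\sigma_{I'} = \sigma_{J'} + k\, a_1$ in the expansion of $\sum_{j \neq i} a_j - a_i$ collapses the expression to $(k+1)\, a_1 + 2\, \sigma_{J' \setminus \{i\}} + \sigma_{\mathrm{rest}}$, which is manifestly in $S$. When instead $J' = \varnothing$ (so $I' \neq \varnothing$), the congruence forces $\sigma_{I'} = k\, a_1$ with $k \geq 1$; I take $i = 1$ and compute $\sum_{j=2}^{e} a_j - a_1 = (k-1)\, a_1 + \sigma_{\{2, \ldots, e\} \setminus I'} \in S$. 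Both cases violate the Apéry hypothesis, proving injectivity.

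The crux of the argument—and essentially the only place requiring insight—is the correct selection of $i$ in each case: an element of the difference $J'$ in the generic case and the least generator $a_1$ itself in the degenerate case. Once $i$ is chosen the cancellation that produces an element of $S$ is a direct expansion, so I expect no serious algebraic obstacle beyond getting the bookkeeping of the disjoint decomposition right.
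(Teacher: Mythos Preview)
Your argument is correct and follows essentially the same idea as the paper's proof: both exploit the $2^{e-1}$ subset sums $\sigma_I$ for $I\subseteq\{2,\ldots,e\}$ and, assuming a collision, pick an index $i$ in the symmetric difference to contradict the hypothesis $\sum_{j\neq i}a_j\in\operatorname{Ap}(S,a_i)$. The only organizational difference is that the paper first observes $\sigma_I\in\operatorname{Ap}(S,a_1)$ for every nonempty $I$ (an immediate consequence of the hypothesis for $i=1$) and then rules out the equality $\sigma_I=\sigma_{I'}$ as integers, whereas you work modulo $a_1$ throughout and therefore need the extra degenerate case $J'=\varnothing$; the paper's preliminary step absorbs that case and makes the collision argument marginally shorter.
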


\begin{proof}
Clearly, $\sum_{j=2}^e a_j \in \operatorname{Ap}(S,a_1)$ implies $a_J := \sum_{j \in J} a_j \in \operatorname{Ap}(S,a_1)$ for every non-empty $J \subseteq \{2, \ldots, e\}$. If there exist two different non-empty subsets $J$ and $J'$ of $\{2, \ldots, e\}$ with $a_J = a_{J'}$, then there exists $i \in J' \setminus J$ such that \[\sum_{\substack{j=1\\ j \neq i}}^e a_j = a_J + \sum_{j \in \{1, \ldots, e\} \setminus (J \cup \{i\})} a_j = a_{J'} + \sum_{j \in \{1, \ldots, e\} \setminus (J \cup \{i\})} a_j \not\in \operatorname{Ap}(S,a_i)\] which contradicts the hypothesis. Therefore, since $0 \in \operatorname{Ap}(S,a_1)$,  we conclude that there are at least $2^{e-1}$ different elements in $\operatorname{Ap}(S,a_1)$. Now, since the cardinality of $\operatorname{Ap}(S,a_1)$ is equal to $a_1$ (see, e.g. \cite[Lemma 2.4]{libro}), we are done.
\end{proof}

Recall that $\min(\operatorname{msg}(S))$ is called \emph{multiplicity of $S$}, denoted $\operatorname{m}(S)$. By \cite[Proposition 2.10]{libro}, $\operatorname{e}(S) \leq \operatorname{m}(S)$. Therefore, if $\operatorname{e}(S) > 2$ and $S$ verifies the condition \eqref{ecu_fullrank}, then $S$ does not have a maximum embedding dimension according to Lemma \ref{lema_multi}. In \cite[Theorem 4.5]{australia} it is shown that numerical semigroups with maximum embedding dimension do not have full quotient rank. Therefore, if the converse of Proposition \ref{Prop_full_rank} is true, Lemma \ref{lema_multi} would provide an alternative proof of this fact.

\subsection*{Data availability}\mbox{}\par
\noindent Data sharing is not applicable to this article, as no datasets were generated or analyzed during the present work.

\subsection*{Declarations}\mbox{}\par 
\noindent The authors are partially supported by Proyecto de Excelencia de la Junta de Andaluc\'{\i}a (ProyExcel\_00868). The second author is partially supported by grant PID2022-138906NB-C21 funded by MCIN/AEI/10.13039/501100011033 and by ERDF ``A way of making Europe''.

\medskip
\noindent The authors have no relevant financial or non-financial interests to disclose.

\medskip
\noindent
All authors have contributed equally in the development of this work.

\end{document}